\theoremstyle{plain}
\newtheorem{theorem}{Theorem}[section]
\newtheorem{lemma}[theorem]{Lemma}
\newtheorem{corollary}[theorem]{Corollary}
\theoremstyle{definition}
\newtheorem{definition}[theorem]{Definition}
\theoremstyle{remark}
\newtheorem{remark}[theorem]{Remark}
\numberwithin{equation}{section}
\newcommand{\R}{{\mathbb R}}
\newcommand{\cL}{{\mathcal L}}
\newcommand{\al}{\alpha}
\newcommand{\be}{\beta}
\newcommand{\ga}{\gamma}
\newcommand{\de}{\delta}
\newcommand{\e}{\varepsilon}
\newcommand{\la}{\lambda}
\newcommand{\vp}{\varphi}
\newcommand{\Si}{\Sigma}
\newcommand{\Om}{\Omega}
\newcommand{\La}{\Lambda}
\newcommand{\Ga}{\Gamma}
\newcommand{\bS}{\mathbb{S}}
\newcommand{\ti}{\times}
\newcommand{\pa}{\partial}
\newcommand{\su}{\subset}
\newcommand{\qu}{\quad}
\newcommand{\sm}{\setminus}
\newcommand{\ra}{\rightarrow}
\newcommand{\D}{\overline{\nabla}}
\newcommand{\conv}{\operatorname{conv}}
\newcommand{\ov}{\overline}
\newcommand{\fr}{\frac}
\newcommand{\inn}[2]{\left\langle {#1},{#2} \right\rangle}
\newcommand{\norm}[1]{\left\lVert#1\right\rVert}
\def\({\left(}
\def\){\right)}
\def\<{\left\langle}
\def\>{\right\rangle}
\newcommand{\og}{\overline{g}}
\newcommand{\oD}{\overline{\nabla }}
\newcommand{\bed}{\beta_\delta}
\newcommand{\op}{(\partial_t -\mathcal{L})}
\title[]{Gauss curvature flow with shrinking obstacle}
\author{Ki-Ahm Lee}
\address{Department of Mathematical Sciences and Research Institute of Mathematics, Seoul National University, Seoul 08826, Republic of Korea}
\email{kiahm@snu.ac.kr}
\author{Taehun Lee}
\address{School of Mathematics, Korea Institute for Advanced Study, Seoul 02455, Republic of Korea}
\email{taehun@kias.re.kr}
\subjclass[2020]{53E10 (35R35, 35K96)}
\keywords{Gauss curvature flow, obstacle problem, free boundary problem}
\begin{document}

\begin{abstract}	
We consider a flow by powers of Gauss curvature under the obstruction that the flow cannot penetrate a prescribed region, so called an obstacle. 
For all dimensions and positive powers, we prove the optimal curvature bounds of solutions and all time existence with its long time behavior. We also prove the $C^1$ regularity of free boundaries under a uniform thickness assumption.
\end{abstract}

\maketitle


\section{Introduction}

We study an evolution of hypersurfaces by powers of Gauss curvature under the restriction that the hypersurface cannot enter a prescribed region. The prescribed region is called an obstacle, and we assume that it shrinks slowly with time.

Recall that for a given number $\al>0$, a one-parameter family of immersions $X:M^n\ti [0,T) \ra \R^{n+1}$ defining complete convex hypersurfaces $\Si_t=X(M^n,t)$ is a solution of the \textit{$\al$-Gauss curvature flow} if $X$ satisfies
\begin{align*}
\fr{\pa }{\pa t} X(p,t) = K^\al(p,t) \vec{n}(p,t).
\end{align*}
Here $K(p,t)$ and $\vec{n}(p,t)$ are the Gauss curvature and the inward unit normal vector of $\Si_t$ at $X(p,t)$, respectively. 

The classical Gauss curvature flow ($\al=1$) was first introduced by Firey \cite{Firey74_M} to model the process of wearing stone on a beach. 
Later, Chow \cite{Chow85_JDG} generalizes this flow to the $\al$-Gauss curvature flow ($\al>0$). 
Since then, many authors have studied this flow, and most of them focused on the analysis of singularities which was completely carried out when $\al \ge \tfrac{1}{n+2}$, see \cite{Andrews96_JDG,Andrews99_Invent,BCD17_Acta,AGN16_AM}
and the references therein. See also \cite{Andrews03_JAMS} for the case $0<\al<\tfrac{1}{n+2}$ and $n=1$. 
Note that the evolution of any closed initial hypersurface under the $\al$-Gauss curvature flow develops a singularity. 

We consider an $\al$-Gauss curvature flow with an obstacle in $\R^{n+1}$, which will block the development of collapsing of the hypersurface to a point. 
Precisely, given a strictly convex, closed hypersurface $\Si_0$ in $\R^{n+1}$, we consider a time dependent obstacle $\Phi=\Phi_t$ inside of $\Si_0$, which is a one-parameter family of strictly convex, closed hypersurfaces. We then evolve the hypersurface $\Si_0$ by the $\alpha$-Gauss curvature flow, $\al>0$, on the condition that the evolving hypersurface $\Si_t$ cannot penetrate the obstacle $\Phi_t$ for all time. Here the obstacle shrinks slowly with time, and for example one can consider obstacles of the form 
\begin{align*}
\Phi_t = \tfrac{1+e^{-t}}{2}\Phi_0.
\end{align*}
We remark that if the obstacle shrinks quickly so that it disappears in a finite time, then it cannot prevent the development of the collapse.
The precise definition of obstacles will be given in Definition \ref{def-shrinking} below.

Note that in general the evolutions of such hypersurfaces cannot be defined in the classical sense since the speed $\tfrac{\pa}{\pa t} X$ has jump discontinuities when the hypersurface $\Si_t$ touches the obstacle $\Phi_t$ for the first time. Thus we need to consider a generalized concept of solutions, that is, viscosity solutions.  

 A one-parameter family of immersions $X:M^n \ti [0,T) \ra \R^{n+1}$ with $\Si_t=X(M^n,t)$ is said to be a viscosity solution to the \textit{$\al$-Gauss curvature flow with a shrinking obstacle} if it satisfies
\begin{equation}\label{eq-main-X}
\begin{split}
\fr{\pa }{\pa t} X(p,t)&= K^\al(p,t)\vec{n}(p,t) \qu \text{for all} \qu X(p,t)\notin \Phi_t,
\\
\mathrm{conv}(\Si_t) &\supseteq \mathrm{conv}(\Phi_t) \qu \text{for all} \qu 0\le t< T,
\\
\inn{\fr{\pa }{\pa t} X}{\vec{n}}&\le  K^\al \qu \text{in} \qu M^n\ti [0,T),
\end{split}
\end{equation}
in the viscosity sense (see Definition \ref{def:vis-sol}), where $\mathrm{conv(\Si)}$ denotes the convex hull of $\Si$. 

As we discussed above, the solutions to \eqref{eq-main-X} are at most Lipschitz continuous in the time variable, which induces that solutions to \eqref{eq-main-X} have at most $C^{1,1}$ regularity. We will show that solutions to \eqref{eq-main-X} have indeed $C^{1,1}$ regularity, and therefore this is the optimal regularity. 

\begin{theorem}\label{t-main}
Let $\Si_0$ be a closed strictly convex smooth hypersurface that encloses a shrinking obstacle $\Phi$ (Definition \ref{def-shrinking}). Then \eqref{eq-main-X} starting from $\Si_0$ has a unique viscosity solution $\{\Si_t\}_{t\ge0}$ that exists for all time, and the solution $\{\Si_t\}_{t\ge0}$ has the optimal regularity which is $C^{1,1}(\bS^n\ti[0,\infty))$. In particular, principal curvatures $\la_1,\ldots,\la_n$ of $\Si_t$ are globally bounded away from zero and infinity. Moreover, there exists a finite time $T^*=T^*(n,\al,\Si_0,\Phi)$  such that $\Si_t=\Phi_t$ for $t\ge T^*$.
\end{theorem}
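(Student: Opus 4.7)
The plan is to build the solution by smooth penalization, push uniform geometric estimates through the limit, and deduce the finite-time coincidence from the shrinking assumption on $\Phi$. For each $\delta>0$, I would approximate \eqref{eq-main-X} by the unconstrained parabolic flow
\[
\partial_t X^\delta = \bigl(K^\alpha - \bed(\rho)\bigr)\vec{n},
\]
where $\rho$ is the signed distance of $X$ to $\Phi_t$ and $\bed$ is a smooth, non-negative, increasing penalty that vanishes for $\rho\ge\delta$ and diverges as $\rho\to 0^-$. In support-function form on $\bS^n$ this is uniformly parabolic on bounded sets of curvatures, and standard theory yields a smooth, strictly convex global solution $\Si_t^\delta$; the penalty prevents the finite-time collapse characteristic of the classical $\al$-Gauss curvature flow.

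Next I would establish $\delta$-uniform geometric estimates. The $C^0$ sandwich $\conv(\Phi_t)\subset\conv(\Si_t^\delta)\subset\conv(\Si_0)$ is immediate from comparison. The speed bound $K^\al\le C$ comes from a Tso-type maximum principle: at an interior spatial maximum of $K^\al$ the penalty contributes a favorable sign, and otherwise the classical technique applies. The lower bound $\la_i\ge c_0>0$ on principal curvatures, which is precisely what yields the spatial $C^{1,1}$ regularity of $\Si_t^\delta$ through its support function, is the most delicate step; I would obtain it by combining (i) the initial pinching of $\Si_0$, (ii) the evolution equation of the second fundamental form away from the obstacle, and (iii) a geometric barrier built from the strictly convex obstacle $\Phi_t$. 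The upper bound $\la_i\le C'$ is then automatic from $K^\al=\prod\la_i^\al\le C$ together with $\la_i\ge c_0$, and Lipschitz regularity of $\partial_t u^\delta$ follows from differentiating the equation and using the two-sided curvature bounds.

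The chief technical obstacle will be securing the lower bound on $\la_i$ with constants uniform in $\delta$ and in $t$: the penalty $\bed$ enters the evolution of the principal curvatures with a potentially unfavorable sign, so geometric barriers from $\Phi_t$ itself must complement the maximum principle. Once this is done, Arzelà–Ascoli via the uniform $C^{1,1}$ estimate yields a limit $\Si_t$ as $\delta\to 0$, which is readily verified to be a viscosity solution of \eqref{eq-main-X} inheriting the two-sided curvature bounds and the $C^{1,1}(\bS^n\ti[0,\infty))$ regularity; uniqueness follows from a standard comparison principle for viscosity sub- and supersolutions of \eqref{eq-main-X}.

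Finally, for finite-time coincidence I would compare the support functions $u$ of $\Si_t$ and $v$ of $\Phi_t$. Their difference $w=u-v\ge 0$ satisfies $\partial_t w = -K^\al[u] - \partial_t v$ on the free region $\{w>0\}$. The uniform lower bound on $\la_i$ yields $K^\al\ge c_1>0$ there, while the slow-shrinking assumption (Definition \ref{def-shrinking}) ensures $|\partial_t v|\le c_1/2$, so $\partial_t w\le -c_1/2$ wherever $w>0$. A standard barrier then forces $w\equiv 0$ for $t\ge T^*$ with $T^*=2\|u_0-v_0\|_\infty/c_1$, after which the flow is pinned to the obstacle.
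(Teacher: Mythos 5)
Your penalization framework and the broad outline (uniform curvature bounds, Arzelà--Ascoli limit, viscosity comparison for uniqueness) match the paper's strategy, but two steps as sketched do not close, and one of them is a genuine logical flaw.

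The lower bound on the principal curvatures is where essentially all of the difficulty of the theorem sits, and your sketch does not resolve it. Differentiating $\bed$ twice in the evolution of $h_{ij}$ produces a term $\bed''|\oD(u-\vp)|^2$ of the wrong sign, and no combination of ``initial pinching,'' ``evolution of the second fundamental form away from the obstacle,'' and ``geometric barriers from $\Phi_t$'' controls it: barriers give $C^0$ information, not lower bounds on $\la_{\min}$ near the contact set, and the contact set is exactly where the penalty's second derivative is active. The paper's resolution is to apply the maximum principle not to $\la_{\min}^{-1}$ but to $\la_{\min}^{-1}e^{-\chi\bed(u-\vp)}$, with $\chi$ chosen using the uniform positive lower bound on $K^\al$, so that the $e^{-\chi\bed}$ factor cancels the bad $\bed''$ term; and that lower bound on $K$ itself comes from the one structural feature your sketch never uses, namely that the obstacle strictly shrinks and is a supersolution, which forces $-\pa_t u\ge -\pa_t\vp>0$ via a maximum principle for $\pa_t(u-\vp)$. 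For a stationary obstacle this lower bound fails and $\la_{\min}$ can indeed reach zero, so any proof that does not invoke $\pa_t\vp<0$ in an essential way cannot be right.

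Your finite-time coincidence argument is circular. You want $K^\al[u]\ge c_1>0$ uniformly in $t$ and $|\pa_t v|\le c_1/2$, so that $\pa_t(u-v)\le -c_1/2$ on $\{u>v\}$. But the time-uniform lower bound on $K^\al$ is precisely what the paper can only establish \emph{after} proving coincidence: before that, the available bound (Corollary \ref{c-K>c}) is $K^\al(\cdot,t)\ge -\pa_t\vp(\cdot,t)$, which may tend to zero as $t\to\infty$ (e.g.\ $\vp=e^{-t}\vp_0+(1-e^{-t})\vp_\infty$), and in any case only gives $\pa_t(u-\vp)\le 0$, not strict decrease. Moreover $|\pa_t v|\le c_1/2$ is not part of Definition \ref{def-shrinking}; the definition only constrains $-\pa_t\vp$ by $\min K_0^\al$ and $\min K_{\Phi_0}^\al$, not by any bound on the \emph{solution's} curvature. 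The paper avoids this circularity entirely by a barrier argument that is independent of lower bounds on $K[u]$: for each target point $\Phi(z_1,t_1)$ it takes an enclosing ball tangent there, runs the \emph{unconstrained} $\al$-Gauss curvature flow of that ball backwards in time (explicit power-law shrinking spheres), observes by a concavity-in-$t$ argument that this ball flow stays outside the obstacle on $[0,t_1]$ hence is itself a solution of the obstacle problem, and checks that if $t_1>T^*:=(\|u_0\|_\infty+\rho)^{n\al+1}/(n\al+1)$ then at $t=0$ the ball encloses $\Si_0$; comparison then traps $u$ and forces $u(z_1,t_1)=\vp(z_1,t_1)$. You would need to replace your argument with something like this.

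Two smaller remarks: you place the penalty on the signed distance to $\Phi_t$ and let it diverge at zero, whereas the paper penalizes the support-function difference $u-\vp$ with a \emph{bounded} penalty $\bed(0)=-\|K_\Phi\|_\infty^\al$, which is what makes the evolution equations for $h_{ij}$ tractable and keeps $\bed$ uniformly bounded (Lemma \ref{bed-bdd}); a divergent penalty would complicate the short-time existence and the curvature computations. And the step ``$\conv(\Phi_t)\subset\conv(\Si_t^\delta)$ is immediate from comparison'' is not immediate for a bounded penalty; the paper proves it via a maximum-principle argument that uses the particular choice of $|\bed(0)|$.
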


It is worth noting that our result for the optimal regularity relies on several curvature bounds. We first prove that the several curvatures are bounded by some constant that depends on $T$. By showing that the final shape of the solution becomes exactly the same shape as the obstacle after a finite time $T^*$, we can remove the time dependence in the curvature estimates.

To prove curvature estimates, we employ the method of penalization, which allows penetration of the obstacle up to order of $\delta>0$ and then recovers the original obstacle problems by pushing the hypersurfaces out of the obstacle as $\delta\rightarrow 0$.
In the other words, we obtain smooth approximate solutions, $u_{\delta}$, of \eqref{astde}  that will converge to the solution of \eqref{eq-main-X} as the penalty increases, i.e. $\delta\rightarrow 0$ .

A crucial ingredient in this paper is the smallest curvature estimate (Lemma \ref{lem-lac}) established by applying the maximum principle argument to the quantity
\begin{align*}
\lambda_{\min}^{-1}e^{-\chi\bed(u-\vp)},
\end{align*}
where $\la_{\min}=\min\{\la_1,\ldots,\la_n\}$ and $\bed$ is the penalty term defined in Section \ref{sec:pre}. The constant $\chi$ will be chosen in terms of the minimum value of Gauss curvature which we will estimate in Corollary \ref{c-K>c}.

Let us review some related results for stationary obstacles, $\Phi_t \equiv\Phi_0$.
In \cite{LL21_CVPDE}, we studied the obstacle problem in dimension $n=2$ with $0<\al\le 1$. In that paper, we established the optimal curvature bounds for principal curvatures, $0\le \la_1,\la_2\le C$. We point out that for stationary obstacles the lowest principal curvature may become zero. 
For the mean curvature flow, Rupflin and Schn\"{u}rer \cite{RS20_ASNSPCS} studied an obstacle problem by considering graphical mean curvature flows in one dimension higher. They proved local $C^{1,1}$ regularity for solutions in the graphical setting, which implies the existence of solutions for all time in the original dimension. See also \cite{ACN12_AIHPANL,GTZ19_GF,MN15_IFB}.

\bigskip

Once the optimal regularity of solutions to obstacle problems is established, the main interest will be a question of regularity of its free boundaries. In obstacle problems, a \textit{free boundary} of a solution is the interface between a \textit{coincidence set} and \textit{non-coincidence set} of the solution and its obstacle.

To find a local regularity of free boundaries, we use the following local graph representation of \eqref{eq-main-X}: for functions $w:Q_1\ra \R$ and $\phi:Q_1\ra \R$ such that $\Si_t$ and $\Phi_t$ are locally represented by the graphs of $w$ and $\phi$, respectively, we consider
\begin{equation}\label{eq-main-w}
\begin{alignedat}{3}
\fr{\pa}{\pa t} w &= \fr{\det(D^2w)^\al}{(1+|Dw|^2)^{\frac{(n+2)\al-1}{2}}} \qu &&\text{in} \qu \Om(w),
\\
w &\le \phi \qu &&\text{in} \qu Q_1
\\
\fr{\pa}{\pa t} w &\le \fr{\det(D^2w)^\al}{(1+|Dw|^2)^{\frac{(n+2)\al-1}{2}}} \qu &&\text{in} \qu Q_1, 
\end{alignedat}
\end{equation}
where $Q_r=B_r\ti (-r^2,0)\su \R^n\ti\R$ is the parabolic ball of radius $r$ and $\Om(w)=\{(x,t)\in Q_1: w(x,t)<\phi(x,t)\}$ is the non-coincidence set. Note that the coincidence set $\La(w)$ and the free boundary $\Ga(w)$ is then given by $\La(w)=Q_1\sm \Om(w)$ and $\Ga(w)=\pa \Om(w)$.

The regularity of free boundaries has been studied by many authors since the seminal work of Caffarelli \cite{Caffarelli77_Acta} which proves local $C^1$ regularity of free boundaries in obstacle problems for a class of elliptic equations and the Stefan problem. In the first named author's thesis \cite{Lee98_Thesis} and \cite{LS01_CPAM}, the authors analyzed the regularity of free boundaries in the obstacle problem for a fully nonlinear operator. See also \cite{LP21_MA,LPS19_CVPDE} for double obstacle problems. 

In the recent works \cite{FS14_ARMA,FS15_AMPA}, Figalli and Shahgholian proved $C^1$ regularity of free boundaries for general fully nonlinear elliptic/parabolic equations of the form $F(D^2w)=1$ or $F(D^2w)-\pa_tw=1$. Soon after Indrei and Minne \cite{IM16_AIHPANL} extended the result to the equations $F(D^2w,x)=f(x)$ and $F(D^2w,x,t)-\pa_tw= f(x,t)$, where $F$ and $f$ are Lipschitz continuous in both the space and time variable. We note that in these results $F$ is assumed to be convex in the hessian variable. 

However, our equation \eqref{eq-main-w} does not fall into the known cases. This is because our $F$ has $Dw$ dependence which is not Lipschitz continuous in the time variable although it is Lipschitz in the space variable after establishing the optimal $C^{1,1}$ regularity. We stress that most geometric PDEs have similar gradient dependence on the operator, which requires some extra work.

As indicated above, the theory for the free boundary regularity has been developed for convex operators. Hence we assume $\al \le \frac{1}{n}$ to make the operator $-\det(\cdot)^\al$ convex. 
We will prove $C^1$ regularity of the free boundary $\Ga(w)$ under the so called \textit{uniform thickness assumption} on the coincidence set $\La(w)$. 

To describe the thickness assumption, we first define the minimal diameter (also known as the minimal width) $\mathrm{MD(E)}$ of a set $E$ by the minimum distance between two parallel hyperplanes that contain the set $E$. Then the thickness of $\La(w)$ at $x_0$ in each time slice $t\in [t_0-r^2,t_0+r^2]$ is measured as
\begin{align*}
\mathfrak{d}_r(w,x_0,t_0) = \inf_{|t-t_0|\le r^2}\fr{\mathrm{MD}(\La(w)^t\cap B_r(x_0))}{r},
\end{align*}
where $\La(w)^t=\{x\in B_1:(x,t)\in \La(w)\}$ is the time slice of $\La(w)$ with respect to $t$.
We say that $\La(w)$ satisfies a uniform thickness condition if there exists $\e>0$ such that for all $0<r<\tfrac{1}{4}$ and $(x,t)\in\Ga(w)\cap Q_r$, it holds $\mathfrak{d}_r(w,x,t) > \e$.

\begin{theorem}\label{t-main-2}
Let $w$ be a solution to \eqref{eq-main-w} with $\al \le \frac{1}{n}$. If $\La(w)$ satisfies a uniform thickness condition, then the free boundary $\Ga(w)$ is locally a $C^1$ graph in space-time.
\end{theorem}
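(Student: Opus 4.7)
\emph{Reduction, a priori bounds, and non-degeneracy.} I would first pass to the normalized unknown $v := \phi - w \ge 0$, which converts \eqref{eq-main-w} into the obstacle problem on $\{v > 0\}$,
\[
\pa_t v = \mathcal{G}(D^2 v, D v, x, t), \qquad \mathcal{G}(M, p, x, t) := \pa_t \phi(x, t) - F(D^2 \phi - M,\, D\phi - p),
\]
where $F(N, q) := \det(N)^\al (1+|q|^2)^{-((n+2)\al - 1)/2}$. Since $\al \le 1/n$, the map $N \mapsto -\det(N)^\al$ is convex on the cone of positive definite matrices, hence $-\mathcal{G}$ is convex in $M$ and the equation falls within the convex class treated by Figalli--Shahgholian and Indrei--Minne. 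Theorem \ref{t-main} supplies $v \in C^{1,1}_x \cap \mathrm{Lip}_t$ with the principal curvatures pinched between positive constants, giving the upper quadratic bound $v(x, t) \le C(|x - x_0|^2 + |t - t_0|)$ at a free boundary point $(x_0, t_0)$. A standard quadratic-polynomial barrier, using the uniform parabolicity of $\mathcal{G}$ at the pinched Hessian values, supplies the matching quadratic non-degeneracy $\sup_{Q_r(x_0, t_0) \cap \{v > 0\}} v \ge c r^2$.

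\emph{Blow-up and classification.} For $(x_0, t_0) \in \Gamma(w)$, I would consider the parabolic rescalings $v_r(y, s) := r^{-2} v(x_0 + r y, t_0 + r^2 s)$. A Taylor expansion argument using $v \ge 0$, the $C^{1,1}_x$ bound, and $Dv(x_0, t_0) = 0$ yields the crucial half-rate estimate $|Dv(x_0, t)|^2 \le 2 \|D^2 v\|_\infty\, v(x_0, t) \le C|t - t_0|$. Combined with the $C^{1,1}_x$ bound, this provides uniform $C^{1,\be}_{\mathrm{loc}}$ control on $\{v_r\}$, so a subsequence converges to a global $v_0 \ge 0$ that solves the constant-coefficient convex parabolic obstacle problem obtained by freezing all coefficients of $\mathcal{G}$ at $(x_0, t_0)$. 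The uniform thickness assumption passes to $\{v_0 = 0\}$, and the Caffarelli dichotomy in the parabolic fully-nonlinear form developed by Figalli--Shahgholian and Indrei--Minne (directional monotonicity of parabolic Alt--Caffarelli--Friedman type) classifies $v_0$ as a half-space solution $v_0(y, s) = \tfrac{\kappa}{2}(y \cdot e)_+^2$ with some $e(x_0, t_0) \in \bS^{n-1}$ and $\kappa > 0$ determined by the frozen operator. Uniqueness of $e(x_0, t_0)$ via iteration-comparison and its continuous dependence on $(x_0, t_0) \in \Gamma(w)$ via a perturbation argument then yield that $\Gamma(w)$ is locally a $C^1$ graph in space-time.

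\emph{Main obstacle.} The principal difficulty is that, after the optimal $C^{1,1}$ estimate, $Dw$ is Lipschitz in $x$ but only bounded in $t$, so the effective $(x, t)$-dependence of the frozen linearization of $\mathcal{G}$ is $\mathrm{Lip}_x$ but merely $L^\infty_t$, placing us just outside the Indrei--Minne framework. The decisive step making the blow-up feasible is the half-rate estimate $|Dv(x_0, t)| \le C\sqrt{|t - t_0|}$: combined with the $C^{1,1}_x$ bound it gives $|Dw(x_0 + ry, t_0 + r^2 s) - Dw(x_0, t_0)| \le Cr(|y| + \sqrt{|s|})$, so that the time oscillation of the gradient argument of $F$ on the parabolic scale $r$ is $O(r)$ rather than merely $O(1)$, and the gradient slot of $F$ collapses to the constant $Dw(x_0, t_0)$ in the blow-up limit. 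Propagating this gradient smallness through the parabolic rescaling is the technical core of the proof; the remaining steps follow the now-standard convex-operator obstacle machinery.
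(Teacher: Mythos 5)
Your overall scheme coincides with the paper's: set $v=\phi-w$, prove non-degeneracy via a quadratic barrier using $f\ge c>0$ and uniform ellipticity, blow up around a free-boundary point, classify the blowup via the Figalli--Shahgholian machinery for convex operators (valid since $\al\le 1/n$), and then upgrade directional monotonicity of the blowup to directional monotonicity of $v$ itself. That much is correct and matches Section 8 of the paper. The half-rate observation $|Dv(x_0,t)|^2\le 2\|D^2v\|_\infty\,v(x_0,t)\le C|t-t_0|$ is also correct, and it is the reason the gradient slot $r D_y v_r$ freezes to $Dv(x_0,t_0)=0$ in the blowup limit.

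However, your ``main obstacle'' paragraph misidentifies where the difficulty is actually resolved, and the sentence ``the remaining steps follow the now-standard convex-operator obstacle machinery'' conceals a genuine gap. The half-rate estimate gives only $C^{1/2}$ regularity of $Dv$ in $t$, which still leaves the effective coefficients of the frozen operator outside the Lipschitz-in-time framework of Indrei--Minne; as the paper stresses in the introduction, this is precisely why their result cannot be quoted off the shelf. The paper's actual resolution has two ingredients that your proposal does not supply. First, a separate continuity statement for the time derivative, $\lim_{X\to\pa\Omega}\pa_t v(X)=0$ (the paper's \eqref{eq:vt=0}), proved by a second blowup and the time-independence of $v_0$; without it one cannot upgrade the inequality $C_\kappa\pa_e v_0 - v_0\ge 0$ to $C_\kappa\pa_e v_{r_j}-v_{r_j}\ge -\e_0$, because the $e_t\pa_s$ component of $\pa_e$ does not converge from the $C^{1,\gamma}$ compactness alone (one has only weak-$*$ convergence of $\pa_s v_{r_j}$). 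Second, the directional-monotonicity improvement lemma (the paper's Lemma \ref{lem:dir-mono}) must be re-derived for the rescaled equation \eqref{eq-vr}: there the gradient enters as $rDv$, so differentiating in $e$ produces lower-order terms $r(F_{p_i}-f_{p_i})\pa_i v_e$ and inhomogeneities $r(f_x-F_x)\cdot e_x+r^2(f_t-F_t)e_t$, and it is the explicit factor of $r$ in front of the gradient slot --- not the half-rate estimate --- that renders these errors small enough for the comparison-function argument to close. Your appeal to ``iteration-comparison'' and ``a perturbation argument'' for uniqueness and continuity of the blowup direction is not wrong as a general strategy, but it is not a substitute for these two steps, which is where the $t$-irregularity of $Dw$ is actually tamed.
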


The thickness condition is necessary for Theorem \ref{t-main-2} since the free boundary $\Ga(w)$ may have arbitrary lower dimensional shapes such as a point when solutions to \eqref{eq-main-w} touch the obstacle for the first time.

\bigskip

We remark that there is another free boundary problem arising in the Gauss curvature flow, the Gauss curvature flow with flat sides. Hamilton \cite{Hamilton94_incollection} observed that if an initial hypersurface has flat sides, then its evolution by the Gauss curvature flow also has flat sides for some time, in contrast with the mean curvature flow whose flat sides disappear instantly. In the problem, free boundaries are the interface between flat sides and non-flat sides. Comparing with our obstacle problem that has growing contact sets, this free boundary problem has shrinking flat sides. In dimension $n=2$, regularity for the free boundary has been established by Daskalopoulos and Hamilton \cite{DH99_JRAM} for short time and by Daskalopoulos and the first named author \cite{DL04_Invent} for all time. See also \cite{CEI99_IUMJ} for waiting time effects when an initial data is smooth enough. Note that the same phenomena were observed for the $\al$-Gauss curvature flow when $\fr{1}{2}<\al\le 1$ and $n=2$, see \cite{KLR13_JDE}.

The paper is organized as follows. In Section \ref{sec:pre}, we fix geometric notation and define a shrinking obstacle. We also introduce a penalty term to approximate \eqref{eq-main-X} and present short time existence and evolution equations for the approximate solutions.
In Section \ref{sec:penalty}, we prove that the penalty term is bounded, which ensures uniform curvature estimates later and then the convergence of approximating solutions to solutions of the obstacle problem. In Section \ref{sec:speed>0}-\ref{sec:c<la<C}, we establish several curvature bounds possibly depending on the existing time $T$. By estimating the speed function, we prove upper and lower bounds for the Gauss curvature in Section \ref{sec:speed>0} and \ref{sec:speed<C}, respectively. We then obtain both the upper and lower bounds for principal curvatures in Section \ref{sec:c<la<C}. The proof of Theorem \ref{t-main} and \ref{t-main-2} are presented respectively in Section \ref{sec:pfThm1} and \ref{sec:pfThm2}.

\section{Preliminaries}\label{sec:pre}
Throughout the paper we use the Einstein summation convention in which repeated upper and lower indices are summed. 

For a smooth hypersurface $\Si^n$ in $\R^{n+1}$, we denote the induced metric by $g=\{g_{ij}\}$, the second fundamental form by $h=\{h_{ij}\}$, and the Weingarten map by $W=\{h_i^j\}$. The principal curvatures are the eigenvalues of $W$ denoted by $\la_1\le \cdots \le \la_n$. We will use the mean curvature $H=\la_1+\cdots+\la_n$ and the Gauss curvature $K=\la_1\cdots \la_n$.

If $\Si^n$ is compact without boundary and strictly convex, then the Gauss map $\nu:\Si^n\ra \bS^n$ given by the outward unit normal vector to $\Si^n$ is a diffeomorphism. In the case we describe the hypersurface $\Si^n$ as its support function $u:\bS^n \ra \R$ defined by $u(z)=\inn{\nu^{-1}(z)}{z}$. In other words, the hypersurface $\Si^n$ is the image of the embedding $\nu^{-1}(z)=\oD  u(z) +u(z) z$, where $\oD$ is the Levi-Civita connection on $\bS^n$ induced by the standard metric $\og=\{\og_{ij}\}$. It can be checked that the second fundamental form $h$ is expressed as
\begin{align}\label{eq-hu}
h_{ij} = \oD_i\oD_j u + u \og_{ij},
\end{align}
and that the eigenvalues of $h_{ij}$ with respect to $\og$ are the reciprocal of the principal curvatures, i.e., 
\begin{align}\label{eq-hggb}
h_{ij}\og^{jk} = g_{ij}b^{jk}
\end{align}
where $b=\{b^{ij}\}$ is the inverse of the second fundamental form $h$. 

\,\\
\noindent
\textbf{Initial data.}
Let $\Si_0$ be a strictly convex, closed initial hypersurface embedded in $\R^{n+1}$ and $u_0:\bS^n\ra \R$ be the support function of $\Si_0$. Both $\Si_0$ and $u_0$ are used as initial data. Geometric quantities associated to the initial data will be used with subscript $0$, for example $H_0$, $K_0$, and so on.

\,\\
\noindent
\textbf{Obstacle.}
Let $\{\Om(t)\}_{t\ge 0}$ be a family of bounded open sets that are strictly convex. We always assume that $\Om(t)$ shrinks in $t$, that is, $\Om(t_1)\supset\Om(t_2)$ for $t_1<t_2$, and that the limit $\Om_{\infty}:=\cap_{t\ge 0} \Om(t)$ has an interior point.
For convenience, we assume that the limit obstacle $\Om_\infty$ contains the origin.
We also assume that the initial obstacle $\Om(0)$ is strictly enclosed by the initial data, i.e., $\ov{\Om(0)}\subset \operatorname{conv}(\Si_0)$, where $\operatorname{conv}(\Si_0)$ denotes the convex hull of $\Si_0$.
We denote the boundary of $\Om(t)$ by $\Phi_t$ and the support function of $\Phi_t$ by $\vp(\cdot, t):\bS^n\ra \R$. Denote furthermore the principal curvatures of $\Phi_t$ by $\mu_1,\cdots,\mu_n$ (not necessarily in increasing order).
Other geometric quantities associated to the obstacle will be used with subscript $\Phi$, for example $H_\Phi$, $K_\Phi$, and so on.

\begin{definition}[shrinking obstacles]\label{def-shrinking}
An obstacle $\Phi$ is said to be \textit{shrinking} if its support function $\vp$ is in $C^{3,1}(\bS^n\ti[0,\infty))$ and satisfies (i) $\pa_t\vp<0$, (ii) the speed $-\pa_{t}\vp$ is non-increasing, (iii) the final shape $\operatorname{conv}(\Phi_\infty)$ has an interior point, and (iv) the principal curvatures $\mu_1,\ldots,\mu_n$ are non-decreasing and bounded.
\end{definition}

A natural example of the shrinking obstacle is any strictly convex hypersurface that homothetically shrinks with decreasing speed, i.e., if $A:[0,\infty)\ra (0,1]$ with $A(0)=1$ and $\lim_{t\ra \infty}A(t)>0$ is decreasing and convex, then $A(t)\vp_0(z)$ is a shrinking obstacle. More generally, given an initial shape $\Om_0$ and a final shape $\Om_\infty$ with their support function $\vp_0$ and $\vp_\infty$ satisfying $\vp_0>\vp_\infty>0$, the obstacle defined by
\begin{align*}
\vp(\cdot,t) = e^{-t} \vp_0+ (1-e^{-t})\vp_\infty
\end{align*}
is clearly shrinking.

Note that for $a>0$, a rescaled obstacle $\vp^a(\cdot,t)=\vp(\cdot,at)$ is also shrinking and has the same initial and final shape with $\vp$. Moreover, given an initial data $\Si_0$, the following holds if we choose $a$ small enough:
\begin{align*}
-\pa_t \vp^a(\cdot,0)=-a\pa_t\vp_0 < \min_{\bS^n}K_0^\al \qu \text{and} \qu -\pa_t \vp^a(\cdot,0)<\min_{\bS^n}K_{\Phi_0}^\al.
\end{align*}
Thus we additionally assume for simplicity that the shrinking obstacle satisfies
\begin{align*}
-\pa_t \vp_0 < \min_{\bS^n} K_0^\al \qu \text{and} \qu -\pa_t\vp_0<\min_{\bS^n} K_{\Phi_0}^\al.
\end{align*}

\begin{remark}
The condition (ii) and (iv) in Definition \ref{def-shrinking} ensure that the obstacle is a supersolution of the $\al$-Gauss curvature flow. Indeed, since the principal curvatures $\mu_i$ of $\Phi(\cdot,t)$ are non-decreasing and the speed $-\pa_t\vp$ is non-increasing, we obtain that $-\pa_t \vp\le-\pa_t \vp_0\le K_{\Phi_0}^\al \le K_{\Phi_t}^\al$.
We point out that if the obstacle $\Phi$ is not a supersolution, then the solution to \eqref{eq-main-X} might be separated from the obstacle after it has the same shape with the obstacle. 
\end{remark}

Let $M^n$ be an $n$-dimensional smooth manifold, and let $X(\cdot,t):M^n\ra \R^{n+1}$ ($0\le t< T$) be a one-parameter family of smooth immersions for some $T\in (0,\infty]$ with an image $\Si_t=X(M^n,t)$ which is a strictly convex, closed hypersurface. For the family $\{\Si_t\}$, we still denote by $g=\{g_{ij}\}$ the induced metrics and by $h=\{h_{ij}\}$ the second fundamental forms, where they depend on the variable $t$. We say that $u:\bS^n\ti [0,T)\ra \R$ is a support function of the family $\{\Si_t\}$ if $u(\cdot,t):\bS^n\ra \R$ is the support function of $\Si_t$.

In terms of the support function, we rewrite the obstacle problem \eqref{eq-main-X} as
\begin{equation}\label{eq-main-u}
\begin{alignedat}{3}
-\pa_t u &= K^\al \qu &&\text{in} \qu \{u>\vp\}, \\
u &\ge \vp \qu &&\text{in}\qu \bS^n\ti[0,T),\\
- \pa_t u &\le K^\al \qu &&\text{in}\qu\bS^n\ti[0,T).\\
\end{alignedat}
\end{equation}
 Here $u$ and $\vp$ are the support functions of $\Si_t$ and $\Phi_t$, respectively, and $K$ is the Gauss curvature of $\Si_t$.
 Moreover, by \eqref{eq-hu} and \eqref{eq-hggb}, the Gauss curvature becomes 
\begin{align}\label{K=det}
K=\det (g^{ij}h_{jk})=\det (b^{ij}\og_{jk})=\fr{\det (\og_{ij})}{\det (h_{ij})}=\fr{\det (\og_{ij})}{\det (\oD_i\oD_ju+u\og_{ij})}.
\end{align}
Combining \eqref{eq-main-u} and \eqref{K=det}, we rewrite the obstacle problem \eqref{eq-main-X} again as 
\begin{align}\label{eq-main-min}
\min\left\{\pa_tu+\left(\fr{\det (\og_{ij})}{\det (\oD_i\oD_ju+u\og_{ij})}\right)^\al,u-\vp\right\}=0.
\end{align}

We recall the notion of viscosity solutions to \eqref{eq-main-min} or equivalently \eqref{eq-main-X}.
\begin{definition}[viscosity solution]\label{def:vis-sol}
A continuous function $u\in C(\bS^n\ti [0,T))$ is a \textit{viscosity subsolution (supersolution)} of \eqref{eq-main-min} if for any point $(x_0,t_0)\in \bS^n\ti(0,T)$, the left hand side of \eqref{eq-main-min} is nonpositive (nonnegative) for all test functions $\eta\in C^2(\bS^n\ti[0,T))$ touching $u$ from above (below) at $(x_0,t_0)$, i.e., $\eta(x_0,t_0)=u(x_0,t_0)$ and $\eta(x,t)\ge (\le) u(x,t)$. We say that $u$ is a \textit{viscosity solution} of \eqref{eq-main-min} if it is both a subsolution and supersolution of \eqref{eq-main-min}. 
\end{definition}

Let $\be: \R \ra \R$ be a non-decreasing, concave, smooth function such that
\begin{align*}
\be(x)=0 \text{ for } x\ge 1,\qu \be''(x)=0 \text{ for } x<0, \qu \be(0)=-1.
\end{align*}
Given a constant $\de>0$ and an shrinking obstacle $\Phi$ in Definition \ref{def-shrinking}, we define $\bed(x) = C_0 \be(x/\de)$, where $C_0=\norm{K_\Phi}_{L^\infty(\bS^n\ti [0,\infty))}^\al<\infty$. For convenience, we choose $\de<\min_{\bS^n}(u_0-\vp_0)$ so that $\bed(u-\vp)=0$ at the initial time. 

To prove the existence and regularity results, we approximate \eqref{eq-main-u} by the following singular perturbation problem: 
\begin{equation}\label{astde}\tag{$\ast_\de$}
\begin{alignedat}{3}
-\pa_t u &=K^\al+\bed(u-\vp) \qu &&\text{in}\qu \bS^n\ti (0,T),
\\
u(\cdot,0)&= u_0 \qu &&\text{on} \qu \bS^n.
\end{alignedat}
\end{equation}
We say that a one-parameter family $\{\Si_t\}_{0\le t\le T}$ of hypersurfaces $\Si_t$ is a solution to \eqref{astde} if the support function of $\Si_t$ satisfies \eqref{astde}. We write $\bed$ instead of $\bed(u-\vp)$.

Let $\cL$ denote the highest order terms of the linearized operator of $-K^\al$. Then it follows from \eqref{K=det} that
\begin{align*}
\cL=\al K^\al b^{ij}\oD_i\oD_j.
\end{align*} 
The associated inner product $\inn{\cdot}{\cdot}_\cL$ and norm $\norm{\cdot}_\cL$ are defined by $\inn{\oD A}{\oD B}_\cL= \al K^\al b^{ij} \oD_iA\oD_jB$ and $\norm{\oD A}_\cL^2 = \inn{\oD A}{\oD A}_\cL$, respectively, for smooth functions $A$ and $B$ on $\bS^n\ti [0,T)$.

The short time existence follows from the standard argument as in \cite{Chow85_JDG} (see also \cite{LL21_CVPDE} for instance) that is based on the inverse function theorem. In fact, since the linearized operator of $\bed(u-\vp)$ does not produce any second order term, the proof in \cite{Chow85_JDG} can be applied without modifications.

\begin{lemma}[Short time existence]\label{lem-locE}
If $\Si_0=X_0(M^n)$ is a strictly convex hypersurface, then \eqref{astde} has a unique smooth solution $\{\Si_t\}_{0\le t< T}$ for short time. 
\end{lemma}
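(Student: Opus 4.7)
The plan is to follow the inverse function theorem argument of Chow \cite{Chow85_JDG}, which applies to the pure $\al$-Gauss curvature flow written in support function form, and to observe that the extra term $\bed(u-\vp)$ is merely a bounded, smooth, zero-order perturbation that does not alter the parabolic structure of the linearization. Concretely, I would work with the scalar equation
\begin{align*}
F(u)(z,t):= \pa_t u + \left(\fr{\det(\og_{ij})}{\det(\oD_i\oD_j u + u\og_{ij})}\right)^{\!\al} + \bed(u-\vp)=0, \qu u(\cdot,0)=u_0,
\end{align*}
and view $F$ as a map between parabolic H\"{o}lder spaces, say $F:\mathcal{X}\to \mathcal{Y}$, where $\mathcal{X}= \{u\in C^{2+\gamma,1+\gamma/2}(\bS^n\ti[0,T]): u(\cdot,0)=u_0\}$ and $\mathcal{Y}=C^{\gamma,\gamma/2}(\bS^n\ti[0,T])$, for some $\gamma\in(0,1)$ and $T>0$ small.

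Next I would linearize $F$ at the stationary extension $\tilde{u}(z,t):=u_0(z)$ of the initial data. Since $\Si_0$ is strictly convex and smooth, $h_{ij}(u_0)=\oD_i\oD_j u_0+u_0\og_{ij}$ is positive definite with eigenvalues bounded away from $0$ and $\infty$; hence $b^{ij}(u_0)$ is uniformly elliptic and $K_0^\al$ is bounded. The Fr\'{e}chet derivative acts on a perturbation $v$ as
\begin{align*}
DF(\tilde{u})[v] = \pa_t v - \al K_0^\al b^{ij}(u_0)\,\oD_i\oD_j v + c_0(z,t)\, v,
\end{align*}
where $c_0$ collects the lower-order contributions from differentiating $K_0^\al$ in the $u$-direction (without derivatives) and from $\bed'(u_0-\vp_0)\ge 0$. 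This is a linear, uniformly parabolic scalar operator on $\bS^n$ with smooth coefficients, so by standard Schauder theory the Cauchy problem $DF(\tilde{u})[v]=f$, $v(\cdot,0)=0$, has a unique solution $v\in \mathcal{X}_0:=\{v\in\mathcal{X}: v(\cdot,0)=0\}$ for every $f\in\mathcal{Y}$, with continuous inverse; thus $DF(\tilde{u}):\mathcal{X}_0\to\mathcal{Y}$ is a bounded linear isomorphism.

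With the linearization invertible, the inverse function theorem in Banach spaces produces, for all $T$ sufficiently small, a unique $u\in \mathcal{X}$ solving $F(u)=0$ in a neighborhood of $\tilde{u}$. Parabolic Schauder bootstrapping then upgrades $u$ to a classical smooth solution; strict convexity of $\Si_t$ persists for short time by continuity of $h_{ij}(u(\cdot,t))$ in $t$. Uniqueness within the class of smooth strictly convex solutions follows from the comparison principle applied to \eqref{astde}, since $\bed$ is non-decreasing in its argument so the operator $K^\al+\bed(u-\vp)$ is proper in $u$. The one delicate point — really the only thing to check carefully beyond Chow's argument — is that the $\bed$-term is harmless; but since $\bed\in C^\infty(\R)$ with $\bed$, $\bed'$, $\bed''$ uniformly bounded for each fixed $\de>0$, it contributes only bounded smooth terms of order zero to both $F$ and $DF(\tilde{u})$, leaving the parabolic principal symbol $\al K_0^\al b^{ij}\oD_i\oD_j$ untouched. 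Consequently no modification of \cite{Chow85_JDG} is needed.
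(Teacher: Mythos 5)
Your argument is exactly the one the paper invokes: it cites Chow's inverse‑function‑theorem proof for the pure $\al$-Gauss curvature flow and observes that $\bed(u-\vp)$ contributes no second‑order terms to the linearization, so the proof carries over unchanged. Your write-up fleshes out that remark (Hölder spaces, the Fr\'echet derivative $\pa_t - \al K_0^\al b^{ij}\oD_i\oD_j + c_0$, Schauder bootstrapping, comparison for uniqueness) but is the same route; the one place you gloss over, just as the paper does, is the standard bookkeeping needed so that $0$ actually lies in the range of the locally invertible map (e.g.\ expanding $\tilde u$ to higher order in $t$ rather than taking the constant-in-time extension), which is handled in Chow's original argument.
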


Under the flow \eqref{astde}, geometric quantities also evolve. 

\begin{lemma}\label{evs}
Let $\{\Si_t\}$ be a solution to \eqref{astde} and $u:\bS^n\ti(0,T]\ra \R$ be the support function of $\{\Si_t\}$. Then the following holds:
\begin{alignat}{3}
\label{ev-u} & \op u&&=\al K^\al Hu-(n\al +1)K^\al -\bed\\
\label{ev-Ka} & \op K^\al&&= \al K^{2\al} H +\cL \bed+\al K^\al H \bed\\
\label{ev-h} & \op h_{ij}&&=-\mathcal{C}_{ij}+(n\al-1)K^\al \og_{ij}-\al K^\al H h_{ij} -\oD^2_{ij}\bed -\bed \og_{ij}
\end{alignat}
where $\mathcal{C}_{ij}= \al^2 K^\al b^{kl}b^{pq}\oD_ih_{pq}\oD_jh_{kl}+ \al K^\al b^{kp}b^{lq}\oD_ih_{pq}\oD_jh_{kl}$.
\end{lemma}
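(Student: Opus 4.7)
My plan is to derive all three evolution identities by direct computation, relying on the flow equation $-\pa_t u = K^\al + \bed$, the support-function formulae $h_{ij} = \oD_i\oD_j u + u\og_{ij}$ and \eqref{K=det}, and on two trace identities on $\bS^n$: $\bij h_{ij} = n$ (since $\bij$ inverts $h_{ij}$) and $\bij \og_{ij} = H$, the latter following from \eqref{eq-hggb} together with the observation that $\og_{ik}\bij$ has the principal curvatures $\la_1,\dots,\la_n$ as eigenvalues. The remaining basic input is the chain-rule identity $\partial K^\al/\partial h_{pq} = -\al K^\al b^{pq}$ and its companion $\oD_j b^{pq} = -b^{pk}b^{lq}\oD_j h_{kl}$.

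Equation \eqref{ev-u} follows at once: substituting $\oD_i\oD_j u = h_{ij} - u\og_{ij}$ into $\cL u = \al K^\al \bij\oD_i\oD_j u$ and applying the two trace relations gives $\cL u = \al K^\al(n - uH)$, and combining with $\pa_t u = -K^\al - \bed$ produces the stated right-hand side. For \eqref{ev-Ka} I differentiate $K^\al$ in time via the chain rule to obtain $\pa_t K^\al = -\al K^\al \bij \pa_t h_{ij}$, substitute $\pa_t h_{ij} = \oD_i\oD_j(\pa_t u) + (\pa_t u)\og_{ij}$ with $\pa_t u = -(K^\al + \bed)$, and use $\bij\og_{ij} = H$ to arrive at $\pa_t K^\al = \cL(K^\al + \bed) + \al K^\al H(K^\al + \bed)$, which rearranges to \eqref{ev-Ka}.

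The delicate identity is \eqref{ev-h}. Differentiating $h_{ij}$ in time yields
\[
\pa_t h_{ij} = -\oD_i\oD_j K^\al - K^\al \og_{ij} - \oD_i\oD_j \bed - \bed \og_{ij}.
\]
Expanding $\oD_i\oD_j K^\al$ by the chain rule, using the formula for $\oD_j b^{pq}$ above, produces the quadratic gradient term $\mathcal{C}_{ij}$ together with $-\al K^\al \bij \oD_j\oD_i h_{pq}$. Subtracting $\cL h_{ij} = \al K^\al b^{kl}\oD_k\oD_l h_{ij}$ then reduces the whole problem to evaluating the contraction
\[
\al K^\al \bij \bigl(\oD_j\oD_i h_{pq} - \oD_p\oD_q h_{ij}\bigr).
\]
Here the crucial ingredient is the Codazzi identity in the support-function setting: since on $\bS^n$ the commutator $[\oD_k,\oD_i]\oD_j u$ cancels exactly against the metric terms coming from $h = \oD^2 u + u\og$, the tensor $\oD_k h_{ij}$ is totally symmetric in $(k,i,j)$. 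Using this symmetry the bracketed expression collapses to the pure commutator $[\oD_j,\oD_p]h_{iq}$, which via the round-metric Riemann tensor $R^m{}_{jkl} = \delta^m_k \og_{jl} - \delta^m_l \og_{jk}$ and the traces $\bij h_{ij}=n$, $\bij \og_{ij}=H$ simplifies to $n\og_{ij} - Hh_{ij}$. Combining the resulting $n\al K^\al \og_{ij}$ with the $-K^\al \og_{ij}$ from $\pa_t h_{ij}$ produces the coefficient $(n\al - 1)K^\al \og_{ij}$, and the right-hand side of \eqref{ev-h} falls out. The only genuine obstacle is the bookkeeping of Riemann-tensor sign conventions and the correct index pairings with $\bij$ in the commutator contractions; everything else is mechanical.
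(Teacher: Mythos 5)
Your proposal is correct and follows essentially the same route as the paper: \eqref{ev-u} and \eqref{ev-Ka} by direct computation using $\cL u=\al K^\al(n-Hu)$ and $\pa_tK^\al=-\al K^\al b^{ij}\pa_t h_{ij}$, and \eqref{ev-h} by expanding $\oD_i\oD_jK^\al$ and then invoking the Codazzi symmetry of $\oD h$ to turn $b^{kl}(\oD_i\oD_jh_{kl}-\oD_k\oD_lh_{ij})$ into a single commutator, which the paper packages as Lemma~\ref{l-codazzi}(ii). The only blemish is a misplaced index in the displayed contraction ($b^{ij}$ where you mean $b^{pq}$, and the free/dummy labels $i,j,p,q$ are used inconsistently), but the surrounding text and the final coefficients $n\og_{ij}-Hh_{ij}$ make the intended computation unambiguous and it checks out.
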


\begin{proof}
By \eqref{eq-hu} and \eqref{eq-hggb}, we have
\begin{align*}
\cL u= \al K^\al b^{ij} \D_i\D_j u=\al K^\al b^{ij}(h_{ij}-u\og_{ij})=n\al K^\al -\al K^\al  Hu.
\end{align*}
Then the equation \eqref{ev-u} follows from $\pa_t u = -K^\al -\bed$.

To prove \eqref{ev-Ka}, we recall \eqref{K=det} so that 
\begin{align*}
\pa_t K = -Kb^{ij}\pa_t h_{ij}=Kb^{ij} (\D_i\D_j (-\pa_t u)+(-\pa_t u)\og_{ij}).
\end{align*}
Using $-\pa_t u = K^\al +\bed$ and \eqref{eq-hggb} again, we see that 
\begin{align*}
\pa_t K^\al =\al K^{\al-1}\pa_t K=\cL(K^\al +\bed)+ (K^\al +\bed)\al K^\al H
\end{align*}
which gives \eqref{ev-Ka}.

For the last assertion, note first that 
\begin{align}\label{eq-ht}
\pa_t h_{ij}=-\D_i\D_j (K^\al+\bed) -(K^\al +\bed) \og_{ij}.
\end{align} On the other hand, since $\D_j K^\al=-\al K^\al b^{kl}\D_jh_{kl}$ and $\D_i b^{kl}=-b^{kp}b^{lq}\D_ih_{pq}$, we have

\begin{equation}
\begin{split}\label{eq-Kaij}
\D_i\D_j K^\al &= \al^2 K^\al b^{kl}b^{pq}\D_ih_{pq}\D_j h_{kl}+\al K^\al b^{kp}b^{lq}\D_ih_{pq}\D_jh_{kl}-\al K^\al b^{kl}\D_i\D_j h_{kl}
\\
&= \mathcal{C}-\al K^\al b^{kl} \D_i\D_j h_{kl}.
\end{split}
\end{equation}
To proceed further we need the following result that we will prove in the next lemma: 
\begin{align*}
\oD_k\oD_l h_{ij}=\oD_i\oD_jh_{kl}-h_{kj}\og_{il}+\og_{kl}h_{ij}-h_{kl}\og_{ij}+\og_{kj}h_{il}.
\end{align*}
Multiplying with $\al K^\al b^{kl}$ we get
\begin{align*}
\cL h_{ij}= \al K^\al b^{kl}\D_i\D_jh_{kl}+\al K^\al(-\og_{ij}+Hh_{ij}-n\og_{ij}+\og_{ij}),
\end{align*}
and substituting this into \eqref{eq-Kaij} we obtain 
\begin{align*}
\D_i\D_j K^\al =\mathcal{C}_{ij}-\cL h_{ij} +\al K^\al (Hh_{ij}-n\og_{ij}).
\end{align*}
This together with \eqref{eq-ht} gives the desired result.
\end{proof}

\begin{lemma}\label{l-codazzi}
The second fundamental form $h_{ij}$ satisfies 
\begin{enumerate}[(i)]
\item\label{e-codazzi} $\oD h$ is totally symmetric, i.e., $\oD_k h_{ij}=\oD_ih_{kj}=\oD_jh_{ki}$,
\item $\oD_k\oD_l h_{ij}=\oD_i\oD_jh_{kl}-h_{kj}\og_{il}+\og_{kl}h_{ij}-h_{kl}\og_{ij}+\og_{kj}h_{il}$.
\end{enumerate}
\end{lemma}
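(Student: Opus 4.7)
The plan is to reduce both identities to the Ricci commutation formula on $\bS^n$, exploiting the constant sectional curvature structure
\begin{align*}
R_{ijkl}=\og_{ik}\og_{jl}-\og_{il}\og_{jk},\qquad R_{kij}{}^l=\og_{ij}\de_k^l-\og_{kj}\de_i^l,
\end{align*}
together with the expression $h_{ij}=\oD_i\oD_j u+u\,\og_{ij}$ from \eqref{eq-hu}.

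For (i), since $h$ is symmetric in $i,j$, it suffices to show $\oD_k h_{ij}=\oD_i h_{kj}$. I would differentiate $h_{ij}=\oD_i\oD_j u+u\,\og_{ij}$, using $\oD\og=0$ so that
\begin{align*}
\oD_k h_{ij}-\oD_i h_{kj}=\bigl(\oD_k\oD_i\oD_j u-\oD_i\oD_k\oD_j u\bigr)+(\oD_k u)\og_{ij}-(\oD_i u)\og_{kj}.
\end{align*}
The commutator in parentheses is the Ricci identity applied to the $1$-form $\oD u$, giving $-R_{kij}{}^l\oD_l u=-\og_{ij}\oD_k u+\og_{kj}\oD_i u$ after substituting the sphere's curvature tensor. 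These two curvature contributions exactly cancel the remaining terms $(\oD_k u)\og_{ij}-(\oD_i u)\og_{kj}$, so the difference vanishes and total symmetry of $\oD h$ follows.

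For (ii), I would chain (i) with the Ricci identity for $(0,2)$-tensors. First use (i) to write $\oD_l h_{ij}=\oD_i h_{lj}$, so that $\oD_k\oD_l h_{ij}=\oD_k\oD_i h_{lj}$. Commuting $\oD_k$ and $\oD_i$ on the $(0,2)$-tensor $h_{lj}$ picks up two curvature terms, one for each lower index:
\begin{align*}
\oD_k\oD_i h_{lj}=\oD_i\oD_k h_{lj}-R_{kil}{}^m h_{mj}-R_{kij}{}^m h_{lm}.
\end{align*}
Then apply (i) once more, $\oD_k h_{lj}=\oD_j h_{kl}$, to obtain $\oD_i\oD_k h_{lj}=\oD_i\oD_j h_{kl}$. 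Substituting the sphere formula for $R_{kij}{}^m$ gives
\begin{align*}
R_{kil}{}^m h_{mj}=\og_{il}h_{kj}-\og_{kl}h_{ij},\qquad R_{kij}{}^m h_{lm}=\og_{ij}h_{kl}-\og_{kj}h_{il},
\end{align*}
and assembling these yields exactly the five-term identity stated.

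The main obstacle is purely bookkeeping: pinning down sign conventions for the Ricci identity and the sphere's Riemann tensor so that the cancellation in (i) is clean and the two curvature contributions in (ii) produce the precise combination $-h_{kj}\og_{il}+\og_{kl}h_{ij}-h_{kl}\og_{ij}+\og_{kj}h_{il}$. Once those conventions are fixed, the rest is a direct substitution.
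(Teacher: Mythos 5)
Your proposal is correct and follows essentially the same route as the paper: for (i), differentiate $h_{ij}=\oD_i\oD_j u+u\og_{ij}$, commute third derivatives via the Ricci identity with the sphere's constant-curvature tensor, and observe the cancellation; for (ii), use total symmetry to rewrite $\oD_l h_{ij}=\oD_i h_{lj}$, commute $\oD_k$ past $\oD_i$ on the $(0,2)$-tensor $h_{lj}$, apply (i) once more to convert $\oD_k h_{lj}$ into $\oD_j h_{kl}$, and substitute the curvature tensor. The only caveat you flagged — pinning down sign conventions — is exactly the only place where care is needed, and your stated conventions are internally consistent and produce the right five-term identity.
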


\begin{proof}
Since $\oD_k h_{ij}= \oD_k h_{ji}$, it suffices to show that $\oD_kh_{ij}=\oD_ih_{kj}$. Recall that $h_{ij}= \oD_i\oD_j u+u\ov g_{ij}$ and $\oD \ov g=0$. Then we compute
\begin{align*}
\oD_k h_{ij} &=\oD_k (\oD_i\oD_j u +u\ov g_{ij})= \oD_i\oD_k\oD_ju-\ov R_{kipj}\oD^pu+(\oD_ku)\ov g_{ij},
\end{align*}
where $\ov R_{kipj}$ denotes the Riemannian curvature tensor on the sphere. 
Since 
\begin{align}\label{e-Rh}
\ov R_{ijkl}=\og_{ik}\og_{jl}-\og_{il}\og_{jk},
\end{align}
we find 
\begin{align*}
\oD_k h_{ij}=\oD_i(h_{kj}-u\ov g_{kj})-(\ov g_{kp}\ov g_{ij}-\ov g_{kj}\ov g_{ip})\oD^pu+(\oD_ku)\ov g_{ij}=\oD_ih_{kj},
\end{align*}
and the first assertion follows.

To prove the second assertion, note that we have shown $\oD_lh_{ij}=\oD_ih_{lj}$. Again by \eqref{e-Rh}, we obtain that
\begin{align*}
\oD_k\oD_lh_{ij}&=\oD_k\oD_ih_{lj}=\oD_i\oD_kh_{lj}-\ov R_{kipl} \og^{pr}h_{rj}-\ov R_{kipj} \og^{pr}h_{rl}\\
&=\oD_i\oD_jh_{kl}-(\og_{kp}\og_{il}-\og_{kl}\og_{ip}) \og^{pr}h_{rj}-(\og_{kp}\og_{ij}-\og_{kj}\og_{ip}) \og^{pr}h_{rl}\\
&=\oD_i\oD_jh_{kl}-h_{kj}\og_{il}+\og_{kl}h_{ij}-h_{kl}\og_{ij}+\og_{kj}h_{il}
\end{align*}
and the conclusion follows.
\end{proof}

\section{Uniform boundedness of the penalty term}\label{sec:penalty}
To extract a solution from approximate solutions of \eqref{astde}, we need several estimates that will be presented in this and the following three sections. In this section we establish the uniform boundedness of the penalty term $\bed(u-\vp)$. Once we obtain the estimate, any possible limit of subsequence of the approximate solutions can not pass through the obstacle $\Phi$. Indeed,
since $\bed(x)\ra -\infty$ as $\de\ra 0$ for each $x<0$, we conclude from the uniform boundedness of $\bed(u-\vp)$ that
\begin{align*}
\liminf_{\de\ra0} (u-\vp)\ge 0.
\end{align*}

\begin{lemma}\label{bed-bdd}
Let $u$ be a solution of \eqref{astde} in $\bS^n\ti [0,T)$. Then 
\begin{align*}
-C_0\le \bed(u(z,t)-\vp(z,t)) \le 0 \qu \text{for} \qu (z,t)\in \bS^n\ti[0,T),
\end{align*}
where $C_0=\norm{K_\Phi}_{L^\infty(\bS^n\ti [0,\infty))}^\al$ is independent of $\de$.
\end{lemma}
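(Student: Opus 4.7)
My plan is to handle the two inequalities separately, both by direct inspection. The upper bound $\bed \le 0$ is immediate from the construction of the penalty: since $\be$ is non-decreasing and vanishes on $[1,\infty)$, we have $\be \le 0$ everywhere, so $\bed = C_0 \be(\cdot/\de) \le 0$ pointwise.

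For the lower bound, I would first rephrase it as a geometric inequality. Because $\be$ is strictly increasing up to $x=1$ with $\be(0) = -1$, the inequality $\bed(u-\vp) \ge -C_0 = \bed(0)$ is equivalent to $u \ge \vp$ on $\bS^n \times [0,T)$. Thus it suffices to show that a smooth solution of \eqref{astde} never crosses its obstacle, which I would establish by a first-contact-time argument.

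Set $T^* := \inf\{t \in [0,T) : \min_{\bS^n}(u-\vp)(\cdot, t) \le 0\}$. Since $\ov{\Om(0)} \su \conv(\Si_0)$ gives $u_0 > \vp_0$ uniformly, continuity forces $T^* > 0$, and I would argue by contradiction assuming $T^* < T$. Compactness of $\bS^n$ then produces a contact point $z^* \in \bS^n$ with $(u - \vp)(z^*, T^*) = 0$. The first-time property yields $\pa_t(u-\vp)(z^*, T^*) \le 0$, while spatial minimality gives $\oD^2(u-\vp)(z^*, T^*) \ge 0$. Combined with $u(z^*, T^*) = \vp(z^*, T^*)$, formula \eqref{eq-hu} produces the positive-definite matrix inequality $h^u_{ij} \ge h^\vp_{ij}$ at $(z^*, T^*)$, which via \eqref{K=det} translates to $K^\al \le K^\al_\Phi \le C_0$. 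Substituting $\bed(0) = -C_0$ into \eqref{astde} then gives $-\pa_t u(z^*, T^*) = K^\al - C_0 \le 0$; on the other hand, Definition \ref{def-shrinking}(i) together with $\pa_t(u-\vp)(z^*, T^*) \le 0$ forces $-\pa_t u \ge -\pa_t \vp > 0$. This contradiction yields $T^* = T$ and hence the lower bound.

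I do not anticipate a serious obstacle; the whole argument is a textbook maximum-principle comparison. The small subtleties are that the constant $C_0$ has been chosen precisely so that $\bed(0) = -C_0$ matches the $K_\Phi^\al$ upper bound, and that the zeroth-order term $u\og_{ij}$ in \eqref{eq-hu} is cancelled by $u = \vp$ at the contact point, which is what makes $h^u \ge h^\vp$ hold there. Closing the contradiction also relies crucially on the strict supersolution property $-\pa_t \vp > 0$ encoded in the definition of a shrinking obstacle.
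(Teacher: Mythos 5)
Your proof is correct and follows essentially the same route as the paper: treat $\bed\le 0$ as immediate from the construction of $\be$, and establish the lower bound via a first-contact argument in which the Hessian comparison $\oD^2(u-\vp)\ge 0$ together with $u=\vp$ yields $K^\al\le K_\Phi^\al$ at the contact point, which combined with $\bed(0)=-C_0$ and $-\pa_t\vp>0$ produces the contradiction. The only cosmetic differences are that you phrase the Gauss-curvature comparison through the second fundamental forms $h^u\ge h^\vp$ while the paper works directly with the determinants, and that you record the equivalence $\bed(u-\vp)\ge -C_0 \Leftrightarrow u\ge\vp$ explicitly before launching the argument.
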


\begin{proof}
The inequality $\bed\le0$ follows from the definition of $\bed$. To obtain the lower bound, assume that there exists a point $(z_0,t_0)$ in $\bS^n\ti [0,T)$ such that $u=\vp$ at $(z_0,t_0)$ for the first time. Since the initial hypersurface $\Si_0$ strictly encloses the initial obstacle $\Phi_0$, we see $t_0>0$, and $u-\vp$ attains an interior minimum at $(z_0,t_0)$ over $\bS^n\ti [0,t_0]$. At the minimum point, we have
\begin{align*}
\oD(u-\vp)=0,\qu \oD^2(u-\vp)\ge0, \qu (u-\vp)_t\le 0, \qu u=\vp,
\end{align*}
and it follows 
\begin{align*}
K=\fr{\det \og_{ij}}{\det(\oD_i\oD_j u+u \og_{ij})}\le \fr{\det \og_{ij}}{\det(\oD_i\oD_j \vp+\vp \og_{ij})}=K_{\Phi}.
\end{align*}
Then 
\begin{align*}
-\vp_t\le -u_t= K^\al +\bed(u-\vp)\le K_\Phi^\al+\bed(0),
\end{align*}
and hence it is contradict to our choice of $\bed(0)=-\norm{K_\Phi}_{L^\infty(\bS^n\ti [0,\infty))}^\al$ in the definition of the penalty term since $\vp_t <0$.
Therefore, we conclude $u>\vp$ for all $(z,t)\in \bS^n\ti [0,T)$. Hence, the monotonicity of the function $\bed:\R \ra \R$ implies
$$\bed(u-\vp)\ge \bed(0)=-\norm{K_{\Phi}}_{L^\infty(\bS^n\ti [0,\infty))}^\al=-C_0$$
for all $(x,t)\in \bS^n\ti[0,T)$.
\end{proof}

\begin{remark}
In the proof, we have shown $u>\vp$ for all $(z,t)\in \bS^n\ti [0,T)$, which means that the evolving hypersurface $\Si_t$ under \eqref{astde} cannot penetrate or even touch the obstacle. This is because $|\bed(0)|$ is chosen sufficiently large. If one consider that the condition $|\bed(0)|$ has another uniform constant, for example $|\bed(0)|=1$, then one can prove $u>\vp-C(\de)$ for some constant $C(\de)$ with $C(0+)=0$, in which case the hypersurface might penetrate the obstacle but the depth of penetration is controlled.
\end{remark}

\section{Speed estimates I: Uniform positive lower bounds}\label{sec:speed>0}

In this section we prove uniform positive lower bounds for the speed of the solution to \eqref{astde}. As a direct corollary, we also obtain uniform positive lower bounds on the Gauss curvature of $\Si_t$.

\begin{lemma}\label{l-uvpt}
Let $\Si_0$ be an initial hypersurface and $\Phi$ be a shrinking obstacle. If $u:\bS^n\ti [0,T)\ra \R$ is a smooth solution to \eqref{astde}, then
\begin{align}\label{i-uvpt}
\pa_t(u-\vp)\le 0 \qu \text{in } \bS^n\ti [0,T).
\end{align}
\end{lemma}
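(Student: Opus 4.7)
The plan is to apply the parabolic maximum principle to $r := \pa_t(u-\vp)$ on the interval $[0,T)$ where the smooth solution from Lemma~\ref{lem-locE} exists. The initial value is strictly negative: our choice $\de<\min_{\bS^n}(u_0-\vp_0)$ forces $\bed(u_0-\vp_0)=0$, so \eqref{astde} gives $\pa_t u|_{t=0}=-K_0^\al$, and the standing assumption $-\pa_t\vp_0<\min_{\bS^n}K_0^\al$ then yields $r|_{t=0}<0$.

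To derive a parabolic equation for $r$, I would differentiate \eqref{astde} in $t$ to obtain
\[
\pa_t^2 u = -\pa_t K^\al - \bed'(u-\vp)\,r,
\]
and rewrite \eqref{ev-Ka} using $\pa_t u=-K^\al-\bed$ as $\pa_t K^\al = -\cL(\pa_t u) - \al K^\al H\,\pa_t u$. Substituting $\pa_t u = r + \pa_t\vp$ and rearranging yields
\[
(\pa_t - \cL)\,r - (\al K^\al H - \bed')\,r = F, \qquad F := \cL(\pa_t\vp) + \al K^\al H\,\pa_t\vp - \pa_t^2\vp.
\]

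The heart of the argument is to show $F\le 0$. Differentiating the identity $h_{\Phi,ij}=\oD_i\oD_j\vp + \vp\og_{ij}$ in $t$ gives $\oD_i\oD_j\pa_t\vp = \pa_t h_{\Phi,ij} - \pa_t\vp\,\og_{ij}$, and invoking $b^{ij}\og_{ij}=H$ (as exploited in the computation of $\cL u$ in the proof of Lemma~\ref{evs}), the two $\pa_t\vp\cdot H$ contributions cancel, leaving
\[
F = \al K^\al\,b^{ij}\,\pa_t h_{\Phi,ij} - \pa_t^2\vp.
\]
Condition (ii) of Definition~\ref{def-shrinking} ($-\pa_t\vp$ non-increasing) gives $\pa_t^2\vp\ge 0$. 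Condition (iv), interpreted at the tensor level as the principal radii $1/\mu_i$ (the eigenvalues of $h_\Phi$ relative to $\og$) being non-increasing, means $\pa_t h_\Phi$ is negative semidefinite as a $(0,2)$-tensor; contracting with the positive-definite $b^{ij}$ yields $b^{ij}\pa_t h_{\Phi,ij}\le 0$. Thus $F\le 0$.

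Finally, since the zeroth-order coefficient $\al K^\al H-\bed'$ is bounded on any $[0,T_1]\su[0,T)$ by smoothness of the short-time solution, the parabolic maximum principle (after absorbing that coefficient via the standard $e^{At}$-rescaling, or equivalently applying a barrier $r-\e e^{At}$ with $\e\to 0$) gives $r\le 0$ on $[0,T_1]$; letting $T_1\uparrow T$ yields \eqref{i-uvpt}. The main obstacle in this proof is the sign of $F$: the cancellation via $b^{ij}\og_{ij}=H$ is precisely what enables the purely geometric hypotheses (ii) and (iv) of the shrinking obstacle to translate directly into the sign of $F$, and reading (iv) at the tensor level (rather than merely eigenvalue-wise) is essential for the contraction with $b^{ij}$ to produce a sign.
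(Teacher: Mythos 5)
Your proof is correct and follows the same route as the paper: set $Z=\pa_t(u-\vp)$, derive the linear parabolic inequality by differentiating \eqref{astde} in $t$, show the obstacle terms are non-positive using conditions (ii) and (iv) of Definition~\ref{def-shrinking}, and conclude by the maximum principle. Your packaging of the $\vp$-terms into $F=\al K^\al b^{ij}\pa_t h^\vp_{ij}-\pa_t^2\vp$ and your standard $e^{At}$-rescaling in the last step are slightly cleaner than the paper's $\e$-contradiction, but the substance is identical — including the implicit reading of condition (iv) as a semidefiniteness statement on $\pa_t h^\vp$, which you rightly flag and the paper uses without comment.
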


\begin{proof}
Since $\Si_0$ is strictly convex and $\min_{\bS^n} (u_0-\vp(\cdot,0))\ge\de>0$ by our choice of $\de$, we have at $t=0$, $\pa_t u =-K^\al -\bed = -K_0^\al <0$. On the other hand, from the definition of the shrinking obstacle $\Phi$, we have 
\begin{align*}
-\pa _t \vp(\cdot, 0) <\min_{\bS^n}K_0^\al\le  K_0^\al
\end{align*}
which implies \eqref{i-uvpt} at $t=0$.

Recall that $K = \det \og_{ij} / \det (\oD_i\oD_j u +u\og_{ij})$. If we differentiate $-\pa_t u=K^\al +\bed$ with respect to $t$, we obtain
\begin{align}\label{ev-ut}
\pa_t(-\pa_t u)=-\al K^\al b^{ij}(\oD_i\oD_j \pa_t u+\pa_t u \og_{ij})+\bed' (u-\vp)_t.
\end{align}
For $Z(x,t):=\pa_t(u(x,t)-\vp(x,t))$, \eqref{ev-ut} can be rewritten as
\begin{align*}
-Z_t-\pa^2_t\vp= -\al K^\al b^{ij}(\oD_i\oD_j(Z+\pa_t\vp)+(Z+\pa_t\vp)\og_{ij})+\bed'Z.
\end{align*}
Thus we have
\begin{align*}
Z_t=\cL Z +(\al K^\al H-\bed') Z +\al K^\al b^{ij}(\oD_i\oD_j\pa_t\vp+\pa_t\vp\og_{ij})-\pa_t^2\vp.
\end{align*}

To estimate the terms involving the obstacle $\vp$, recall that the shrinking obstacle satisfies $\pa_t^2 \vp\ge0$ and $\pa_t\mu_i(\cdot,t)\ge0$, where $\mu_1(\cdot,t), \cdots, \mu_n(\cdot,t)$ is the principal curvatures of $\Phi_t$. Since the second fundamental form of the obstacle is given by
\begin{align*}
h_{ij}^{\vp}= \oD_i\oD_j\vp+\vp \og_{ij},
\end{align*}
it follows from the strict convexity of the solution and the property $\pa_t\mu_i\ge0$ that
\begin{align*}
\al K^\al b^{ij}(\oD_i\oD_j\pa_t\vp+\pa_t\vp \og_{ij})=\al K^\al b^{ij}\pa_t(h_{ij}^{\vp})=\al K^\al b^{ij}\og_{jk}\pa_t(\og^{kl}h_{lj}^\vp)\le0.
\end{align*}
Note that in the last inequality we have used that the eigenvalues of $\og^{kl}h_{lj}^\vp=(b^\vp)^{kl}g^\vp_{lj}$ are $1/\mu_1,\cdots, 1/\mu_n$ which are decrease in time. Hence, 
\begin{align}\label{i-Z}
Z_t\le\cL Z +(\al K^\al H-\bed') Z.
\end{align}

Assuming the contrary, we take a time $t_0$ such that $\sup_{\bS^n\ti[0,t_0]}Z>0$. Set $m=\sup_{\bS^n\ti[0,t_0]}(\al K^\al H-\bed')<\infty$ and let $\tilde Z = Ze^{-mt-t}$. By \eqref{i-Z}, $\tilde Z$ satisfies 
\begin{align}\label{i-tZ}
\tilde Z_t = Z_t e^{-mt-t}-(m+1)Ze^{-mt-t}\le \cL \tilde Z -\tilde Z.
\end{align}
For a small number $\e>0$, we take a point $(x_1,t_1)$ satisfying $\tilde Z(x_1,t_1)=\e$ for the first time. Clearly, $t_0>0$.
Then we have that $\tilde Z\le \e$ on $\bS^n\ti [0,t_1]$ and $\tilde Z(z_1,t_1)=\e$,
and that
\begin{align*}
\pa_t \tilde Z\ge0, \qu \oD_i\tilde Z=0, \qu \oD_i\oD_j \tilde Z \le 0 \qu \text{at } (z_1,t_1),
\end{align*}
which is contradict to \eqref{i-tZ}. Thus $\sup_{\bS^n\ti[0,t]}Z\le 0$ for all $t\in(0,T)$.
\end{proof}

Using the lemma above, we obtain the uniform positive lower bound for the Gauss curvature. 

\begin{corollary}\label{c-K>c}
Let $\Si_0$ be an initial hypersurface and $\Phi$ be a shrinking obstacle. If $\{\Si_t\}_{0\le t< T}$ is the solution to \eqref{astde}, then the Gauss curvature of $\Si_t$ has a uniform positive lower bound, i.e., 
\begin{align*}
\inf_{\bS^n\ti [0,T)}K\ge c_T>0,
\end{align*}
where $c_T=c(\al,T,\Phi)$ is a constant independent of $\de$.
\end{corollary}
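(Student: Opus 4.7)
The plan is to extract the lower bound for $K$ essentially for free from Lemma \ref{l-uvpt}, combined with the sign of the penalty term and the strictly negative speed of the obstacle. Writing the approximate flow \eqref{astde} in the form
\begin{align*}
K^\al = -\pa_t u - \bed(u-\vp),
\end{align*}
and using the monotonicity $\pa_t u \le \pa_t \vp$ from Lemma \ref{l-uvpt}, I obtain pointwise on $\bS^n\ti[0,T)$ the inequality
\begin{align*}
K^\al \;=\; -\pa_t u - \bed(u-\vp) \;\ge\; -\pa_t \vp - \bed(u-\vp).
\end{align*}
Since $\bed \le 0$ by construction (it is the rescaling of a nonpositive function $\be$), one has $-\bed \ge 0$, and therefore
\begin{align*}
K^\al \;\ge\; -\pa_t \vp \qu \text{on} \qu \bS^n\ti[0,T).
\end{align*}

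Next I would upgrade this pointwise inequality to a uniform bound using the hypotheses in Definition \ref{def-shrinking}. By (i) one has $\pa_t\vp < 0$ everywhere, and by (ii) the speed $-\pa_t\vp$ is non-increasing in $t$. Since $\vp\in C^{3,1}(\bS^n\ti[0,\infty))$, the function $(z,t)\mapsto -\pa_t\vp(z,t)$ is continuous. The monotonicity yields $-\pa_t\vp(z,t)\ge -\pa_t\vp(z,T)$ for every $t\in[0,T)$, and the continuous function $-\pa_t\vp(\cdot,T)$ attains a strictly positive minimum $m_T>0$ on the compact set $\bS^n$. Setting $c_T := m_T^{1/\al}$, which depends only on $\al$, $T$, and $\Phi$ (and is manifestly independent of $\de$), we conclude
\begin{align*}
K(z,t) \;\ge\; c_T > 0 \qu \text{for all} \qu (z,t)\in\bS^n\ti[0,T),
\end{align*}
as claimed.

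There is essentially no obstacle in this step; the real work is carried out by Lemma \ref{l-uvpt}, and the present corollary is a direct geometric reading of that comparison estimate against the obstacle's speed. The only subtle point to keep in mind is that the bound is \emph{not} uniform as $T\ra\infty$ from this argument alone, since $-\pa_t\vp$ may decay; obtaining a genuinely $T$-independent lower bound on $K$ will require the later observation (promised in the statement of Theorem \ref{t-main}) that $\Si_t$ coincides with $\Phi_t$ after a finite time $T^*$.
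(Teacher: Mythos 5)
Your proof is correct and follows essentially the same route as the paper: combine the evolution equation $K^\al = -\pa_t u - \bed$, the sign $\bed\le 0$, and Lemma~\ref{l-uvpt} to get $K^\al\ge -\pa_t\vp$, then take the positive minimum over a compact time slab. The paper simply writes $c_T=\min_{\bS^n\times[0,T]}(-\pa_t\vp)^{1/\al}$ without invoking monotonicity in $t$ to locate the minimum at $t=T$; your extra step is harmless but not needed.
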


\begin{proof}
By Lemma \ref{l-uvpt} and the non-positivity of $\bed$, we have 
\begin{align*}
K^\al = -\pa_t u -\bed \ge -\pa_t u \ge -\pa_t \vp\ge \min_{\bS^n\ti[0,T]}(-\pa_t\vp)>0.
\end{align*}
Now the conclusion follows by choosing $c_T=\min_{\bS^n\ti[0,T]}(-\pa_t\vp)^{1/\al}$.
\end{proof}

We will remove the time dependence of the constant $c_T$ above after analyzing the long time behavior of the solution $\{\Si_t\}$. 

\section{Speed estimates II: Uniform upper bounds}\label{sec:speed<C}
In the previous section we proved the positive lower bound on the speed $-\pa_t u$ and the Gauss curvature $K$. In this section we will obtain the opposite bounds on the speed and the Gauss curvature.

\begin{lemma}\label{lem-speed-upper}
Let $\Si_0$ be an initial hypersurface and $\Phi$ be a shrinking obstacle. If $u:\bS^n\ti [0,T)\ra \R$ is a smooth solution of the penalized problem \eqref{astde}, then 
\begin{align*}
-\pa_t u \le C \qu \text{in } \bS^n\ti [0,T),
\end{align*}
where $C=C(n,\al,\Si_0,\Phi)$ is a constant independent of $\de$.
\end{lemma}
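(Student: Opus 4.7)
The plan is to apply the maximum principle to the Tso-type auxiliary function $Q := W/(u - c)$, where $W = -\partial_t u = K^\al + \bed$. Since $\Om_\infty$ has an interior point, after translating coordinates we have $\vp(\cdot,t) \ge \vp_\infty \ge r_\infty > 0$ uniformly in $t$; we then fix $0 < c < r_\infty$ so that $u - c \ge c_0 := r_\infty - c > 0$ on $\bS^n \ti [0,T)$. Note also that Lemma \ref{l-uvpt} forces $u$ to be decreasing in $t$, so $u \le \max_{\bS^n} u_0$.

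First I would derive the parabolic evolution of $W$. Differentiating $-\partial_t u = K^\al + \bed$ in time, exactly as in the proof of Lemma \ref{l-uvpt}, yields
\begin{align*}
(\partial_t - \cL) W = (\al K^\al H - \bed') W - \bed' \vp_t.
\end{align*}
Assume $Q$ attains its maximum over $\bS^n \ti [0,T]$ at $(z_*, t_*)$ with $t_* > 0$; if the maximum occurs at $t=0$, then $Q \le \max K_0^\al/(u_0 - c)$ by our choice of $\de$ (which gives $\bed(u_0 - \vp_0) = 0$) and we are done. At the interior maximum, the first-order condition $\oD Q = 0$ gives $\oD W = Q\,\oD u$, which makes the gradient cross-terms in $\cL Q$ cancel. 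Combined with $\cL u = \al K^\al(n - Hu)$ and the algebraic identity $H - Hu/(u-c) = -cH/(u-c)$, one obtains
\begin{align*}
0 \le -\frac{c\al K^\al H}{u-c}\,Q + \frac{n\al K^\al}{u-c}\,Q + Q^2 - \bed'\,\frac{W + \vp_t}{u-c}.
\end{align*}

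The penalty term is absorbed via Lemma \ref{l-uvpt}: since $W + \vp_t = -(u-\vp)_t \ge 0$ and $\bed' \ge 0$, the last term is non-positive and can be dropped. This leaves
\begin{align*}
\frac{c\al K^\al H}{u-c}\,Q \le \frac{n\al K^\al}{u-c}\,Q + Q^2
\end{align*}
at $(z_*, t_*)$. Applying the arithmetic--geometric mean inequality $H \ge n K^{1/n}$, dividing by $Q > 0$, and using the trivial bound $K^\al \ge W = Q(u-c)$ (which follows from $\bed \le 0$), one obtains $cK^{1/n} \le 1 + 1/(n\al)$, and hence $K(z_*, t_*) \le K_{\max} := [(1 + 1/(n\al))/c]^n$. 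Thus $Q(z_*, t_*) \le K_{\max}^\al/c_0$, and since $(z_*, t_*)$ is the global maximum, $Q \le K_{\max}^\al/c_0$ on $\bS^n \ti [0,T)$. Recovering $W = Q(u-c)$ gives $W \le K_{\max}^\al\,(\max_{\bS^n} u_0)/c_0$, independent of $T$ and $\de$.

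The main difficulty is that $\bed'$ may be of order $1/\de$, which would otherwise destroy any $\de$-independent bound. The mechanism that rescues the argument is the sign $\bed'(W + \vp_t) \ge 0$ supplied by Lemma \ref{l-uvpt}, allowing $\bed'$ to be discarded from the inequality rather than estimated. A minor point to verify is that the cancellation of the $\oD$-cross-terms in $\cL Q$ uses only the first-order condition $\oD Q = 0$, so the argument does not rely on the $T$-dependent lower bound of Corollary \ref{c-K>c}.
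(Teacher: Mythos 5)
Your proof is correct and follows essentially the same route as the paper: both apply the maximum principle to the Tso-type quotient $(-\partial_t u)/(u-\text{const})$, use the evolution equation for $-\partial_t u$, and use the AM--GM inequality $H \ge nK^{1/n}$ to close the estimate. The one place you genuinely deviate is in handling the $\beta_\delta'$ term: the paper splits into two cases according to the sign of $(-w - \varphi_t/(u-\rho_0))\beta_\delta'$, bounding $w$ directly in the first case and dropping the $\beta_\delta'$ term in the second; you instead observe that Lemma \ref{l-uvpt} already gives $W + \varphi_t = -\partial_t(u-\varphi) \ge 0$, so the term $-\beta_\delta'(W+\varphi_t)/(u-c)$ is unconditionally nonpositive and the case distinction is unnecessary. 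This is a genuine (if minor) streamlining, and it relies only on ingredients the paper has already established before this lemma.
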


\begin{proof}
Let $\vp$ be the support function of the obstacle and $\rho_0=\fr{1}{2}\min_{\bS^n} \vp_\infty>0$. We consider an auxiliary function on $\bS^n\ti[0,T)$
\begin{align*}
w= \fr{K^\al(z,t) +\bed(u(z,t)-\vp(z,t))}{u(z,t)-\rho_0}.
\end{align*}
Notice that the denominator remains positive since $u\ge \vp\ge \vp_\infty \ge 2\rho_0$ by Lemma \ref{bed-bdd} and the definition of $\rho_0$.

Our first task is to derive the evolution equation for the quantity $w$. Since 
\begin{align*}
\oD_j w&= \fr{\oD_j(K^\al +\bed)}{u-\rho_0}-\fr{K^\al+\bed}{(u-\rho_0)^2}\oD_j(u-\rho_0),\\
\cL w&= \fr{\cL(K^\al +\bed)}{u-\rho_0}-\fr{2\inn{\oD(K^\al+\bed)}{\oD u}}{(u-\rho_0)^2}-\fr{K^\al+\bed}{(u-\rho_0)^2}\cL u+\fr{2(K^\al+\bed)}{(u-\rho_0)^3}\norm{u}_\cL^2
\\
&=\fr{\cL(K^\al +\bed)}{u-\rho_0}-\fr{K^\al+\bed}{(u-\rho_0)^2}\cL u+\fr{2}{u-\rho_0}\inn{\oD w}{\oD u}_\cL,
\end{align*}
we obtain
\begin{align}
\begin{split}\label{eq-w1}
\op w&=\fr{\op(K^\al +\bed)}{u-\rho_0}-\fr{K^\al +\bed}{(u-\rho_0)^2}\op u-\fr{2}{u-\rho_0}\inn{\oD w}{\oD u}_\cL.
\end{split}\end{align}
It follows from \eqref{ev-Ka} in \Cref{evs} that
\begin{align*}
\op(K^\al +\bed)=\al K^{\al} H(K^\al+\bed) +  (u-\vp)_t\bed' .
\end{align*}
Plugging this and the evolution equation \eqref{ev-u} in \Cref{evs} into \eqref{eq-w1} gives that 
\begin{align*}
\op w&=\fr{  (u-\vp)_t\bed'}{u-\rho_0}-\fr{K^\al +\bed}{(u-\rho_0)^2}(\al K^\al H\rho_0-(n\al +1)K^\al -\bed)\\
&\qu-\fr{2}{u-\rho_0}\inn{\D w}{\D u}_\cL.
\end{align*}
Therefore, we arrive at 
\begin{align*}
\op w&=-\fr{2}{u-\rho_0}\inn{\D w}{\D u}_\cL\\
&\qu +\(-w-\fr{\vp_t}{u-\rho_0}\)\bed'-\fr{w}{u-\rho_0}(\al K^\al H\rho_0-(n\al +1)K^\al -\bed).
\end{align*}

Next, we apply the maximum principle argument to $w$. Fix a time $T'\in (0,T)$. Since $\bS^n\ti [0,T']$ is a compact set and $\Si_0$ is strictly convex, $w$ attains positive maximum value at some point $(z_0,t_0)\in \bS^n\ti [0,T']$. If $t_0=0$, then 
\begin{align}\label{ineq-wK}
w\le \fr{\max_{\bS^n} (K _{0})^\al}{\rho_0},
\end{align} 
where $K_{0}$ is the Gauss curvature of the initial hypersurface $\Si_0$.
Now we assume that $t_0>0$. Then $w$ has an interior maximum in $\bS^n\ti [0,T']$ and thus $w$ satisfies at the interior maximum point
\begin{align*}
\D w =0, \qu \D^2 w\le 0, \qu \pa_t w\ge 0.
\end{align*}
This gives that
\begin{align*}
0\le \(-w-\fr{\vp_t}{u-\rho_0}\)\bed'-\fr{w}{u-\rho_0}(\al K^\al H\rho_0-(n\al +1)K^\al -\bed)
\end{align*}
at $(z_0,t_0)$. If $\(-w-\fr{\vp_t}{u-\rho_0}\)(z_0,t_0)\ge0$, then at the same point
\begin{align}\label{ineq-wtau}
w \le \fr{-\pa_t\vp}{\rho_0}\le\fr{-\pa_t\vp_0}{\rho_0}<\fr{\min_{\bS^n} K_0^\al}{\rho_0}
\end{align}
since $\vp$ is convex in time variable.
Otherwise, it follows from $\bed\le 0$ and $\bed'\ge0$ that
\begin{align*}
0\le \fr{w}{u-\rho_0}(-\al K^\al H\rho_0+(n\al +1)K^\al )
\end{align*}
which implies by the arithmetic-geometric mean and $K^\al>0$,
\begin{align}\label{ineq-KHC}
nK^{\fr{1}{n}}\le H\le \fr{n\al+1}{\al\rho_0}
\end{align}
at the point $(z_0,t_0)$. Hence, combining \eqref{ineq-wK}, \eqref{ineq-wtau} and \eqref{ineq-KHC}, we conclude that
\begin{align*}
\max_{\bS^n\ti [0,T']}w\le \fr{1}{\rho_0}\max\left\{  (\max_{\bS^n} K_{0})^\al, \(\fr{n\al+1}{n\al\rho_0}\)^{n\al}\right\}=:C_1.
\end{align*}
Since $T'$ is an arbitrary number in $(0,T)$ and the constant $C_1$ does not depends on $T'$, we conclude that $\max_{\bS^n\ti [0,T)} w\le C_1$.

Finally, by Lemma \ref{l-uvpt}, we observe $u(z,t)\le u_0(z)\le \max_{\bS^n} u_0$, which completes the proof since $-\pa_t u \le w(u-\rho_0) \le C_1 \max_{\bS^n} u_0=:C$.
\end{proof}

Using the uniform boundedness of the penalty term $\bed(u-\vp)$ and upper bounds on the speed, we can obtain the following upper bound on the Gauss curvature $K$. 

\begin{lemma}\label{lem-KC}
Let $\Si_0$ be an initial hypersurface and $\Phi$ be a shrinking obstacle. If $\{\Si_t\}_{0\le t< T}$ be a solution to \eqref{astde}, then the Gauss curvature of $\Si_t$ satisfies
\begin{align*}
\max_{\bS^n\ti [0,T)}K\le C,
\end{align*}
where $C=C(n,\al,\Si_0,\Phi)$ is a constant independent of $\de$.
\end{lemma}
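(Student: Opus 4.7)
The plan is to obtain the upper bound on $K$ as an immediate algebraic consequence of the two estimates already in hand: the uniform upper bound on the speed from Lemma \ref{lem-speed-upper} and the uniform lower bound on the penalty term from Lemma \ref{bed-bdd}. Since $\{\Si_t\}$ is a smooth solution to \eqref{astde}, its support function $u$ satisfies pointwise
\begin{align*}
K^\al = -\pa_t u - \bed(u-\vp) \qu \text{in} \qu \bS^n\ti[0,T).
\end{align*}
From this identity, both upper bounds on $K^\al$ transfer directly from upper bounds on $-\pa_t u$ and on $-\bed$.

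First, I would invoke Lemma \ref{lem-speed-upper} to write $-\pa_t u \le C_1$ on $\bS^n\ti[0,T)$ for a constant $C_1=C_1(n,\al,\Si_0,\Phi)$ independent of $\de$. Then I would use Lemma \ref{bed-bdd}, which says $-C_0 \le \bed(u-\vp) \le 0$, so in particular $-\bed(u-\vp)\le C_0$, where $C_0=\|K_\Phi\|_{L^\infty(\bS^n\ti[0,\infty))}^\al$ depends only on the obstacle. Adding the two inequalities gives
\begin{align*}
K^\al = -\pa_t u - \bed(u-\vp) \le C_1 + C_0,
\end{align*}
so $K \le (C_1+C_0)^{1/\al}$ uniformly on $\bS^n\ti[0,T)$, with a constant independent of $\de$ and $T$.

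There is really no obstacle here; the content of the lemma is already contained in the previous two sections. The only thing to verify is that the constants on the right-hand side are genuinely independent of $\de$, which is exactly what Lemma \ref{bed-bdd} and Lemma \ref{lem-speed-upper} guarantee. I would close by noting that this bound, combined with Corollary \ref{c-K>c}, gives two-sided control on $K$ possibly depending on $T$, to be improved to $T$-independent bounds in later sections.
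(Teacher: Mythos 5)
Your proof is correct and follows exactly the same route as the paper: writing $K^\al = -\pa_t u - \bed(u-\vp)$ and combining the upper bound on $-\pa_t u$ from Lemma \ref{lem-speed-upper} with the bound $-\bed\le C_0$ from Lemma \ref{bed-bdd}. The paper's version is identical in substance, just stated more tersely.
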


\begin{proof}
By Lemma \ref{bed-bdd} and Lemma \ref{lem-speed-upper}, we have
\begin{align*}
K^\al = -\pa_t u-\bed(u-\vp)\le C
\end{align*}
which completes the proof.
\end{proof}

What we have proved in this and the previous section is the following uniform estimate on the Gauss curvature: if $\{\Si_t\}_{0\le t< T}$ is a solution of \eqref{astde}, then its Gauss curvature satisfies
\begin{align}\label{ineq-cKC}
0<c_T\le K(z,t) \le C \qu \text{for all} \qu (z,t)\in \bS^n\ti [0,T),
\end{align}
where $c_T$ and $C$ are constants independent of $\de$. This, however, does not give sufficient controls on each principal curvature. In the next section we will obtain uniform bounds on each principal curvature, which implies the optimal regularity for the solution of \eqref{eq-main-u}.

\section{Uniform bounds on principal curvatures}\label{sec:c<la<C}
Here we establish uniform (independent of $\de$) positive lower bounds on the principal curvatures of the solution to \eqref{astde} using the bounds \eqref{ineq-cKC}. We start with two lemmas that will be used in the proof of Lemma \ref{lem-lac} below. In the following lemma and its proof, we will not use the Einstein summation convention temporarily.

\begin{lemma}[Euler type formula]\label{l-euler}
Let $\Si\su \R^{n+1}$ be a smooth, strictly convex hypersurface, and let $X:\bS^n\ra \R^{n+1}$ be an immersion such that $\Si=X(\bS^n)$ parameterized through the inverse of the Gauss map, i.e., $\nu(X(z))=z$ for any $z\in \bS^n$. Then for any $z\in \bS^n$ and $1\le i\le n$,
\begin{align*}
\fr{h_{ii}(z)}{\og_{ii}(z)} \le\fr{1}{\la_{\min}(z)},
\end{align*}
where $\og_{ij}$ is the standard metric on $\bS^n$ and $h_{ij}$ is the second fundamental form of $\Si$. 
\end{lemma}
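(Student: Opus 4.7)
The plan is to reduce the claim to the variational characterization of the largest generalized eigenvalue of the symmetric pair $(h,\og)$. By \eqref{eq-hggb}, at each $z\in\bS^n$ the matrix $h_{jk}\og^{kl}$ has the same eigenvalues as the inverse Weingarten map $g_{jk}b^{kl}$, namely $1/\la_1(z),\ldots,1/\la_n(z)$. Consequently, the Rayleigh quotient of $h$ with respect to $\og$ satisfies
$$
\max_{0\ne v\in T_z\bS^n}\fr{h(v,v)}{\og(v,v)}=\fr{1}{\la_{\min}(z)},
$$
and testing against the coordinate vector $v=\pa_i$ gives $h(v,v)/\og(v,v)=h_{ii}/\og_{ii}$ (no summation), which is exactly the asserted inequality.

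To make the argument fully explicit without appeal to the Rayleigh principle, I would diagonalize pointwise: pick at $z$ a basis $\{e_1,\ldots,e_n\}$ of $T_z\bS^n$ that is orthonormal for $\og$ and diagonalizes $h$, so that $h(e_j,e_j)=1/\la_{\si(j)}$ for some permutation $\si$. Such a basis exists because $h$ is symmetric and $\og$ is positive definite. Expanding $\pa_i=\sum_j c_{ij}e_j$, one has $\og_{ii}=\sum_j c_{ij}^2$ and
$$
h_{ii}=\sum_j \fr{c_{ij}^2}{\la_{\si(j)}}\le \fr{1}{\la_{\min}(z)}\sum_j c_{ij}^2=\fr{\og_{ii}}{\la_{\min}(z)}.
$$
Dividing by $\og_{ii}>0$ yields the lemma.

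There is no real obstacle here, as the statement is a pointwise linear algebra fact about a symmetric bilinear form relative to a Riemannian metric. The one thing to verify with care is the identification of the generalized eigenvalues of $(h,\og)$ with $1/\la_1,\ldots,1/\la_n$: this follows from \eqref{eq-hggb} together with the definition of the principal curvatures as the eigenvalues of the Weingarten operator $W=\{h_{ik}g^{kj}\}$, whose inverse $\{g_{ik}b^{kj}\}$ has eigenvalues $1/\la_i$. The hypothesis that $\Si$ is parameterized via the inverse Gauss map is used only to ensure that the support-function description, and hence the identity \eqref{eq-hggb}, is available at the point $z$.
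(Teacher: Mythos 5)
Your proof is correct, and it is genuinely leaner than the paper's, while resting on exactly the same pointwise linear-algebra fact. The paper works in a basis $\{E_j\}$ of $T_{X(z)}\Si$ that is orthonormal for the induced metric and diagonalizes the Weingarten map, introduces the change-of-basis matrix $a$ with $\oD_iX=\sum_j a_{ij}E_j$ and $c=\operatorname{diag}(\la_1,\ldots,\la_n)$, and then derives $g=aa^T$, $h=aca^T$, and $\og=hg^{-1}h=ac^2a^T$ before reading off $\og_{ii}\ge\la_{\min}h_{ii}$ entrywise. You bypass that auxiliary construction entirely: since \eqref{eq-hggb} already identifies the generalized eigenvalues of the pair $(h,\og)$ with $1/\la_1,\ldots,1/\la_n$, the bound $h(v,v)/\og(v,v)\le 1/\la_{\min}$ is just the Rayleigh-quotient (min--max) characterization, and specializing $v=\pa_i$ gives the claim. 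Your ``fully explicit'' variant, which $\og$-orthonormally diagonalizes $h$ and expands $\pa_i=\sum_j c_{ij}e_j$, is the same diagonalization the paper carries out, but it avoids ever forming $a$ and $c$, so it is shorter and arguably more transparent. One small point of care you already flag correctly: the simultaneous diagonalization is licit because $\og$ is positive definite and $h$ is symmetric, and strict convexity makes all $\la_j>0$ so that $\la_{\min}>0$ and the inequality is nontrivial. So the approach differs in packaging (abstract generalized-eigenvalue argument versus explicit matrix factorization through the ambient tangent frame), but both hinge on the identity \eqref{eq-hggb}; yours gets to the conclusion with less bookkeeping, while the paper's has the minor virtue of re-deriving that spectral fact from scratch inside the proof.
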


\begin{proof}
Fix a point $z\in \bS^n$ and an orthonormal basis $\{E_1,\cdots,E_n\}$ of $T_z\Si$ such that $L(E_j)= \la_j E_j$ for $j=1,\cdots,n$, where $L$ is the Weingarten map and $\la_1,\cdots,\la_n$ are the principal curvatures of $\Si$ at $z$. Write $\oD_iX = \sum_{j}a_{ij}E_j$ with $a= (a_{ij})$, and denote by $c=\{c_{ij}\}$ the diagonal matrix $\operatorname{diag}(\la_1,\cdots,\la_n)$. Since $L(\D_iX)=\sum_{j}h_i^j(\D_jX)$, we get
\begin{align*}
\sum_{j,k}a_{ij}c_{jk}E_k=L(\D_iX) =h_i^j(\D_jX)=\sum_{j,k}h_i^ja_{jk}E_k
\end{align*}
which implies 
\begin{align}\label{eq-ac}
(ac)_{ik} = \sum_ja_{ij}c_{jk} = \sum_{l,j}h_{il}g^{lj}a_{jk}=(hg^{-1}a)_{ik},
\end{align}
where $h$ and $g$ are $n\ti n$ matrices whose $(i,j)$-components are $h_{ij}$ and $g_{ij}$, respectively, and $g^{-1}$ is the inverse matrix of $g$. 
Observing that 
\begin{align*}
g_{ij}= \inn{\D_iX}{\D_jX}=\sum_{k,l}a_{ik}\de_{kl}a_{jl}=\sum_ka_{ik}a_{jk}=(aa^T)_{ij},
\end{align*}
it follows from \eqref{eq-ac} that $h= aca^{-1}g=aca^T$.

On the other hand, from \eqref{eq-hggb} we get
\begin{align*}
\og =hg^{-1}h=(aca^T)(aa^T)^{-1}aca^T=aca^T(a^T)^{-1}a^{-1}aca^T=ac^2a^T
\end{align*}
which implies
\begin{align*}
\og_{ii} =\sum_{j,k,l}a_{ij}c_{jk}c_{kl}a_{il} \ge \la_{\min} \sum_{j,l,l}a_{ij}c_{jk}\de_{kl}a_{il}= \la_{\min} \sum_{j,k}a_{ij}c_{jk}a_{ik}= \la_{\min}h_{ii}.
\end{align*}
This completes the proof.
\end{proof}

\begin{lemma}\label{l-ev-ebed}
If $u:\bS^n\ti [0,T)\ra \R$ is a smooth solution to \eqref{astde}, then the evolution equation of the following quantity involving $\bed=\bed(u-\vp)$ is
\begin{align*}
\op e^{-\chi\bed}= e^{-\chi\bed}\left [-\chi\bed' \op(u-\vp)+(-\chi^2(\bed')^2+\chi\bed'')\norm{\oD(u-\vp)}^2_\cL\right],
\end{align*}
where $\chi$ is a given constant.
\end{lemma}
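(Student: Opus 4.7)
The plan is to treat this as a direct chain-rule computation, since $e^{-\chi\bed}$ is a smooth function of the single scalar $u-\vp$, and $\op = \pa_t - \cL$ with $\cL = \al K^\al b^{ij}\oD_i\oD_j$ is a second-order operator in space and first-order in time.

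First I would compute the time derivative by the chain rule:
\begin{align*}
\pa_t e^{-\chi\bed} = -\chi e^{-\chi\bed}\bed'\,\pa_t(u-\vp).
\end{align*}
Next I would compute the spatial covariant derivatives. The first covariant derivative is immediate, $\oD_i e^{-\chi\bed} = -\chi e^{-\chi\bed}\bed'\,\oD_i(u-\vp)$. For the second derivative I apply the product rule, which yields two contributions: one from differentiating the exponential factor (producing a $\chi^2(\bed')^2$ term times $\oD_i(u-\vp)\oD_j(u-\vp)$), and one from differentiating $\bed'\oD_i(u-\vp)$, which produces both $\bed''\oD_i(u-\vp)\oD_j(u-\vp)$ and $\bed'\oD_i\oD_j(u-\vp)$.

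Then I would contract against $\al K^\al b^{ij}$ to form $\cL e^{-\chi\bed}$, recognizing two natural groupings: the scalar $\al K^\al b^{ij}\oD_i(u-\vp)\oD_j(u-\vp) = \norm{\oD(u-\vp)}_\cL^2$ (by the definition of $\norm{\cdot}_\cL$ given in Section~\ref{sec:pre}), and $\al K^\al b^{ij}\oD_i\oD_j(u-\vp) = \cL(u-\vp)$. This gives
\begin{align*}
\cL e^{-\chi\bed} = e^{-\chi\bed}\left[(\chi^2(\bed')^2 - \chi\bed'')\,\norm{\oD(u-\vp)}_\cL^2 - \chi\bed'\,\cL(u-\vp)\right].
\end{align*}

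Finally I would form $\op e^{-\chi\bed} = \pa_t e^{-\chi\bed} - \cL e^{-\chi\bed}$ and collect: the factors of $-\chi\bed'\,\pa_t(u-\vp)$ and $+\chi\bed'\,\cL(u-\vp)$ combine into $-\chi\bed'\,\op(u-\vp)$, while the remaining terms combine into $(-\chi^2(\bed')^2 + \chi\bed'')\norm{\oD(u-\vp)}_\cL^2$, matching the stated identity. There is no real obstacle here; the only point to be careful about is the sign bookkeeping in the second derivative and making sure the cross term $\bed'\oD_i\oD_j(u-\vp)$ is properly gathered into $\cL(u-\vp)$ so that the final expression is expressible purely in terms of $\op(u-\vp)$ and the $\cL$-norm of $\oD(u-\vp)$.
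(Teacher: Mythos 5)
Your proposal is correct and follows essentially the same direct chain-rule computation as the paper: compute $\pa_t e^{-\chi\bed}$, $\oD_i e^{-\chi\bed}$, and $\oD_i\oD_j e^{-\chi\bed}$, contract with $\al K^\al b^{ij}$ to get $\cL e^{-\chi\bed}$, and assemble $\op e^{-\chi\bed}$. The sign bookkeeping is handled correctly, so nothing is missing.
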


\begin{proof}
The proof follows from direct computations. In fact, we have
\begin{align*}
\pa_t e^{-\chi\bed}&=-\chi\bed'e^{-\chi\bed} \pa_t(u-\vp),\\
\oD_j e^{-\chi\bed}&= -\chi\bed' e^{-\chi\bed} \oD_j(u-\vp),\\
\oD_i\oD_j e^{-\chi\bed}&=-\chi\bed' e^{-\chi\bed} \oD_i\oD_j(u-\vp)+((\chi\bed')^2-\chi\bed'')e^{-\chi\bed}\oD_i(u-\vp)\oD_j(u-\vp)
\end{align*}
so that $\cL e^{-\chi\bed}=-\chi\bed' e^{-\chi\bed} \cL(u-\vp)+((\chi\bed')^2-\chi\bed'')e^{-\chi\bed}\norm{\oD (u-\vp)}_\cL^2$ and the conclusion follows.
\end{proof}

Now we are ready to prove the main result of this section.

\begin{lemma}\label{lem-lac}
Let $\{\Si_t\}_{0\le t< T}$ be a solution to \eqref{astde} and $\Phi$ be a shrinking obstacle. Then the principal curvatures $\la_1(\cdot,t),\cdots,\la_n(\cdot,t)$ of $\Si_t$ satisfy 
\begin{align}\label{e-la>c}
\inf_{\bS^n\ti[0,T)}\la_i \ge c_T>0,
\end{align}
where $c_T$ is a constant depending only on $n, \al, T, \Phi$, and $\Si_0$. In particular, the constant $c_T$ depends on the minimum speed of the obstacle, $\inf_{\bS^n\ti [0,T]} |\pa_t\Phi|$.
\end{lemma}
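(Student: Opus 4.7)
Following the remark preceding the lemma, my plan is to apply the parabolic maximum principle to the quantity
\[
w(z,t) = \la_{\min}^{-1}(z,t)\,e^{-\chi\bed(u(z,t)-\vp(z,t))}
\]
on $\bS^n\ti[0,T']$ for arbitrary $T'<T$, where the constant $\chi>0$ will be chosen in terms of the Gauss curvature lower bound $\gamma_T$ from Corollary \ref{c-K>c}. By \eqref{eq-hggb}, $\la_{\min}^{-1}$ equals the largest eigenvalue of $h_{ij}$ relative to $\og_{ij}$, so bounding $w$ from above is equivalent to bounding $\la_{\min}$ from below. Since $\la_{\min}^{-1}$ is only Lipschitz at eigenvalue crossings, at an interior maximum $(z_0,t_0)$ with $t_0>0$ (the case $t_0=0$ is controlled by $\Si_0$) I would invoke Hamilton's trick: let $\xi_0\in T_{z_0}\bS^n$ be a unit $\og$-eigenvector of $h(z_0,t_0)$ corresponding to $\la_{\min}^{-1}(z_0,t_0)$ and extend it by parallel transport along geodesics through $z_0$; by the Rayleigh-quotient bound of Lemma \ref{l-euler}, the smooth quantity $\tilde w=(h_{ij}\xi^i\xi^j)/(\og_{ij}\xi^i\xi^j)\,e^{-\chi\bed}$ also attains its maximum at $(z_0,t_0)$, and in $\og$-orthonormal normal coordinates at $z_0$ diagonalizing $h(z_0,t_0)$ with $\xi_0=\pa_1$, one has $\tilde w=h_{11}\,e^{-\chi\bed}$ at $(z_0,t_0)$ with $h_{11}=\la_{\min}^{-1}$.

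Next, I apply the Leibniz identity $\op(fg)=g\op f+f\op g-2\inn{\oD f}{\oD g}_\cL$ to $\tilde w$ with $f=h_{11}/\og_{11}$ and $g=e^{-\chi\bed}$, using $\oD\tilde w(z_0,t_0)=0$ to eliminate the cross term, substituting the evolution equation \eqref{ev-h} together with Lemma \ref{l-ev-ebed}, and expanding $\oD_1\oD_1(u-\vp)=h_{11}-h_{11}^\vp-(u-\vp)$. The condition $\op\tilde w(z_0,t_0)\ge 0$ then reduces---after division by $h_{11}$ and dropping the non-positive $-\mathcal{C}_{11}/h_{11}$---to a pointwise inequality of the form
\begin{align*}
\al K^\al H + \bed' \;\le\; &\fr{(n\al-1)K^\al-\bed+\bed'(h_{11}^\vp+u-\vp)-\bed''(\oD_1(u-\vp))^2}{h_{11}}\\
&-\chi\bed'\,\op(u-\vp)+\bigl(\chi^2(\bed')^2+\chi\bed''\bigr)\norm{\oD(u-\vp)}^2_\cL.
\end{align*}

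The main obstacle is the $\bed''$-terms on the right, which a priori scale as $\de^{-2}$ and need to be absorbed uniformly in $\de$. Here I would exploit that $b^{11}=h_{11}^{-1}$ together with $K\ge\gamma_T$ yields $\norm{\oD(u-\vp)}^2_\cL\ge\al\gamma_T^\al(\oD_1(u-\vp))^2/h_{11}$. Since $|\bed''|/(\bed')^2=(1/C_0)|\be''|/(\be')^2$ is $\de$-independent thanks to the scaling $\bed=C_0\be(\,\cdot\,/\de)$, the constant $\chi$ can be chosen in terms of $\gamma_T$ and the fixed function $\be$ so that the sum of the two $\bed''$-related contributions becomes non-positive pointwise on $\{0<u-\vp<\de\}$. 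The remaining term $-\chi\bed'\op(u-\vp)$ is handled by expanding it as $-\chi\bed'[\al K^\al H(u-\vp)+(\text{bounded})]$ via Lemmas \ref{bed-bdd}, \ref{l-uvpt} and \ref{lem-KC} and the $C^{3,1}$ bounds on $\vp$; the factor $u-\vp\le\de$ on $\supp\bed'$ keeps the $\al K^\al H$ contribution proportional to a small constant, which can be absorbed into the left-hand side. The resulting inequality collapses to $\al K^\al H\le C(n,\al,T,\Si_0,\Phi)$.

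To finish, $K\ge\gamma_T$ and $\la_1=h_{11}^{-1}$ imply $\la_2\cdots\la_n=K/\la_1\ge\gamma_T h_{11}$, so the AM--GM inequality yields $H\ge(n-1)(\gamma_T h_{11})^{1/(n-1)}$. Substituting into $\al K^\al H\le C$ produces an upper bound $h_{11}\le C'(n,\al,T,\Si_0,\Phi)$, equivalently $\la_{\min}\ge c_T>0$, with $c_T$ depending on $\gamma_T$---and hence, via Corollary \ref{c-K>c}, on $\inf_{\bS^n\ti[0,T]}|\pa_t\Phi|$, as asserted in the statement.
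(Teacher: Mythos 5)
You correctly identify the test quantity $\la_{\min}^{-1}e^{-\chi\bed(u-\vp)}$, the Euler-type trick of Lemma~\ref{l-euler} for the non-smooth eigenvalue, the Leibniz expansion, and the AM--GM reduction at the end. The essential gap is in the clause ``dropping the non-positive $-\mathcal{C}_{11}/h_{11}$.'' That term must be kept: it is the only thing available to cancel the positive $O(\de^{-2})$ contribution $\chi^2(\bed')^2\norm{\oD(u-\vp)}^2_\cL$. The paper's proof uses the critical-point condition $\oD W=0$ at $(z_0,t_0)$, which (via the Codazzi symmetry of Lemma~\ref{l-codazzi}) gives $\la_1\oD h_{11}=\chi\bed'\oD(u-\vp)$, and substituting this into $\mathcal{C}_{11}$ yields
\begin{align*}
\mathcal{C}_{11}\ge \al K^\al(\chi\bed')^2\sum_{i=1}^n\fr{\la_i}{\la_1}\,|\oD_i(u-\vp)|^2 = \fr{\chi^2(\bed')^2}{\la_1}\,\norm{\oD(u-\vp)}^2_\cL,
\end{align*}
exactly the dangerous term. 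Once $\mathcal{C}_{11}$ is discarded, no choice of $\chi$ rescues the inequality: making $\chi^2(\bed')^2+\chi\bed''\le 0$ would require $\chi\le|\bed''|/(\bed')^2$, but $\bed''$ can vanish where $\bed'$ does not, and this is in any case incompatible with the constraint $\chi\al c_T^\al\ge 1$ that you (correctly) impose for the $\bed''$ contributions. The $\de$-invariance of $|\be''|/(\be')^2$ is therefore beside the point; what is needed is the pointwise comparison $\mathcal{C}_{11}\ge\chi^2(\bed')^2\la_1^{-1}\norm{\oD(u-\vp)}^2_\cL$ coming from $\oD W=0$.

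A secondary problem is that you never divide through by $\bed'$, and the $O(\bed')$ bookkeeping does not close. Expanding $-\chi\bed'\op(u-\vp)$ produces $+\chi\bed'(n\al+1)K^\al$ on the right, while the left carries only a single $+\bed'$; since $\chi$ is forced to be of size $\ge 1/(\al c_T^\al)$, the net coefficient of $\bed'$ is negative and $O(1)$, leaving a $\de^{-1}$-sized negative contribution that cannot be absorbed into $\al K^\al H$. The paper first shows all $(\bed')^2$ and $\bed''$ contributions combine to be $\le 0$ (via $\mathcal{C}_{11}$ and the choice $\chi\al K^\al\ge 1$), then divides by $\bed'$; the resulting inequality $0<-\oD^2_{11}(u-\vp)-\chi\la_1^{-1}\op(u-\vp)$ is free of penalty derivatives and gives the dichotomy bounding either $1/\la_1$ directly or $\la_n$ (hence $1/\la_1$ via $K\ge c_T$).
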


\begin{remark}
If the obstacle is stationary, i.e. $\Phi_t\equiv \Phi_0$, then there is no positive lower bounds for $\la_i$, $i=1,\ldots,n$, see \cite{LL21_CVPDE}.
\end{remark}

\begin{proof}
Take a time $T'\in (0,T)$.
To establish the lower bound \eqref{e-la>c}, we estimate an upper bound of a function 
\begin{align*}
\tilde W(z,t)= \la_{\min}^{-1}(z,t) e^{-\chi\bed(u-\vp)},
\end{align*}
where $\displaystyle\la_{\min}(z,t)=\min_{i=1,\cdots,n} \la_i(z,t)$ and $\chi$ is a constant to be determined later.
Assume that $\tilde W$ attains its maximum value over $\bS^n\ti [0,T']$ at an interior point $(z_0,t_0)$ with $t_0>0$. 
Now we choose a coordinate chart of $z_0$ such that 
\begin{align}\label{gh-normal}
\og_{ij}(z_0,t_0)=\de_{ij} \qu \text{and} \qu h_{ij}(z_0,t_0)=\la_i^{-1}(z_0,t_0)\de_{ij}
\end{align}
with $\la_1\le \cdots\le \la_n$.
We then note that a function
\begin{align*}
W(z,t)= \fr{h_{11}}{\og_{11}}e^{-\chi\bed(u-\vp)}
\end{align*}
also has the same maximum at the same point $(z_0,t_0)$ since, by Lemma \ref{l-euler}, we see that
\begin{align*}
W(z,t)&\le \tilde W(z,t) \le \tilde W(z_0,t_0),\\
W(z_0,t_0)&=\la_{1}^{-1}e^{-\chi\bed(u-\vp)}(z_0,t_0)=\tilde W(z_0,t_0).
\end{align*}
Thus we can obtain the upper bound for the function $\tilde W$ by estimating the function $W$, and
we have at the point $(z_0,t_0)$, 
\begin{align}\label{Watmax}
\pa_tW\ge 0, \qu \oD W=0, \qu \text{and}\qu \oD^2 W \le 0. 
\end{align}

Our next task is to derive an evolution equation for $W$. We first observe that
\begin{align*}
\pa _t \ov g=0 \qu\text{and}\qu \D \ov g=0,
\end{align*}
and recall the evolution equations for $h_{ij}$ from \eqref{ev-h}:
\begin{align*}
(\pa_t-\cL)h_{ij}=-\mathcal{C}_{ij}+(n\al-1)K^\al \og_{ij}-\al K^\al H h_{ij} -\oD_{ij}^2\bed -\bed \og_{ij}.
\end{align*}
From these, we have
\begin{align*}
(\pa_t-\cL)\left(\fr{h_{11}}{\og_{11}}\right)=-\fr{\mathcal{C}_{11}}{\og_{11}}+(n\al-1)K^\al  -\al K^\al H\fr{h_{11}}{\og_{11}}-\fr{\oD_{11}^2\bed}{\og_{11}} -\bed.
\end{align*}
Observe that at the point $(z_0,t_0)$ we have $n-Hh_{11}/\og_{11}=n-H/\lambda_1\le 0$, and thus 
\begin{align}\label{reactions<0}
(n\al-1) K^\al -\al K^\al H \fr{h_{11}}{\og_{11}}-\bed
&= \al K^\al \left (n-H\fr{h_{11}}{\og_{11}}\right)-K^\al -\bed<0
\end{align}
since $K^\al +\bed=-\pa_t u >-\pa_t \vp>0$ by Lemma \ref{l-uvpt}. However, the second derivative of $\bed$ produces bad terms, which makes us to consider the auxiliary function $e^{-\chi\bed}=e^{-\chi\bed(u(z,t)-\vp(z,t))}$. By Lemma \ref{l-ev-ebed}, we have that
\begin{align}\label{ev-W}
\begin{split}
(\pa_t-\cL)W
&=(\pa_t-\cL)\left(\fr{h_{11}}{\og_{11}}\right)e^{-\chi\bed}
+\fr{h_{11}}{\og_{11}}(\pa_t-\cL)\left(e^{-\chi\bed}\right)
\\
&\qu -2\inn{\oD \left(\fr{h_{11}}{\og_{11}}\right)}{\oD e^{-\chi\bed} }_\cL 
\\
&=\left(-\fr{\mathcal{C}_{11}}{\og_{11}}+(n\al-1)K^\al  -\al K^\al H\fr{h_{11}}{\og_{11}}  -\fr{\oD_{11}^2\bed}{\og_{11}} -\bed\right)e^{-\chi\bed}
\\
&\qu+\left( -\chi\bed'e^{-\chi\bed} \op(u-\vp)+(-\chi^2(\bed')^2+\chi\bed'')e^{-\chi\bed}\norm{\oD(u-\vp)}^2_\cL \right)\fr{h_{11}}{\og_{11}}
\\
&\qu -2e^{\chi\bed}\inn{\oD W}{\oD e^{-\chi\bed} }_\cL 
+2\fr{h_{11}}{\og_{11}}e^{\chi\bed}\norm{\oD e^{-\chi\bed}}^2_\cL.
\end{split}
\end{align}

We finally estimate \eqref{ev-W} at the point $(z_0,t_0)$.
Since $\norm{\oD e^{-\chi\bed}}^2_\cL=(\chi\bed')^2e^{-2\chi\bed}\norm{\oD (u-\vp)}^2_\cL$, by dividing by $e^{-\chi\bed}$ in \eqref{ev-W}, we see that \eqref{Watmax} and \eqref{reactions<0} implies at the point $(z_0,t_0)$,
\begin{align*}
0&<-\fr{\mathcal{C}_{11}}{\og_{11}}  -\fr{\oD_{11}^2\bed}{\og_{11}} 
+\left( -\chi\bed' \op(u-\vp)+(\chi^2(\bed')^2+\chi\bed'')\norm{\oD(u-\vp)}^2_\cL \right)\fr{h_{11}}{\og_{11}}.
\end{align*}
By a direct computation, we have
\begin{align*}
\oD^2_{11}\bed= \bed' \oD^2_{11}(u-\vp)+\bed'' |\oD_1(u-\vp)|^2.
\end{align*}
We then deduce from \eqref{gh-normal} that
\begin{align}\label{i-atWmax1}
\begin{split}
0&<-\mathcal{C}_{11}  -\bed'' |\oD_1(u-\vp)|^2 +(\chi^2(\bed')^2+\chi\bed'')\la_1^{-1}\norm{\oD(u-\vp)}^2_\cL
\\
&\qu -\bed' \oD^2_{11}(u-\vp)-\chi\bed' \la_1^{-1}\op(u-\vp).
\end{split}
\end{align}
On the other hand, using \eqref{gh-normal} again to simplify the quantity $\mathcal{C}_{11}$, we see
\begin{align*}
\mathcal{C}_{11}&= \al^2 K^\al \left(\sum_{k=1}^n\la_k\oD_1h_{kk}\right)^2+ \al K^\al \sum_{k,l=1}^n\la_k\la_l (\oD_1h_{kl})^2
\\
&\ge\al K^\al \sum_{i=1}^n\la_i\la_1 (\oD_1h_{i1})^2=\al K^\al (\chi\bed')^2\sum_{i=1}^n\fr{\la_i}{\la_1}|\oD_i(u-\vp)|^2,
\end{align*}
where we have used $\la_1 \oD h_{11}=\chi\bed'\oD(u-\vp)$ which follows from $\oD W=0$ at $(z_0,t_0)$ and the Codazzi equation \eqref{e-codazzi} in Lemma \ref{l-codazzi}.
Thus we obtain
\begin{align*}
\fr{(\chi^2(\bed')^2+\chi\bed'')}{\la_1}\norm{\oD(u-\vp)}^2_\cL&\le\mathcal{C}_{11}+\al K^\al \chi \bed''|\oD_1(u-\vp)|^2
\end{align*}
since $\norm{\cdot}^2_\cL=\al K^\al \sum_{i=1}^n\la_i|\oD_{i}\cdot|^2$. Moreover, using Corollary \ref{c-K>c}, we can take $\chi=\chi(\al,T,\Phi)>0$ satisfying $\al K^\al \chi\ge 1$ which implies
\begin{align*}
(\al K^\al \chi -1 )\bed'' |\oD_1(u-\vp)|^2\le 0
\end{align*}
since $\bed''\le 0$. Combining these facts together, therefore, the inequality \eqref{i-atWmax1} becomes, after dividing $\bed'$,
\begin{align}\label{ineq-almost final}
0<-\oD^2_{11}(u-\vp)-\chi \la_1^{-1}\op(u-\vp).
\end{align}

To finish the proof, we observe that at the point $(z_0,t_0)$, 
\begin{align*}
\oD^2_{ii}u= h_{ii}-u=\fr{1}{\la_i}-u \qu\text{and}\qu \oD^2_{ii}\vp= h_{ii}^\vp-\vp=\fr{1}{\mu_i}-\vp,
\end{align*}
where $\{\mu_i\}_{i=1,\cdots,n}$ is principal curvatures of $\Phi$ and $h^\vp$ is the second fundamental form of $\Phi$. Using the evolution equation of $u$ in Lemma \eqref{evs}, we have
\begin{align*}
  \op (u-\vp)&=\al K^\al Hu-(n\al +1)K^\al -\bed -\vp'+\al K^\al \sum_{i=1}^n\la_i\left(\fr{1}{\mu_i}-\vp\right)\\
  &=\al K^\al H(u-\vp)-(n\al +1)K^\al -\bed -\vp'+\al K^\al \sum_{i=1}^n\fr{\la_i}{\mu_i}
  \\
  &\ge -(n\al +1)K^\al +\al K^\al \sum_{i=1}^n\fr{\la_i}{\mu_i}
\end{align*}
since $H=\la_1+\cdots+\la_n>0$, $\bed\le 0$, $\vp'<0$, and $u-\vp>0$. 
Hence, \eqref{ineq-almost final} becomes 
\begin{align*}
0<-\fr{1}{\la_1}+\fr{1}{\mu_{\min}}+(u-\vp)+\fr{K^\al \chi}{\la_1}\left(n\al+1 -\al  \fr{\la_n}{\mu_{\max}}\right). 
\end{align*}
Thus we conclude that
\begin{align*}
-\fr{1}{\la_1}+\fr{1}{\mu_{\min}}+(u-\vp)>0 \qu\text{or}\qu n\al+1 -\al  \fr{\la_n}{\mu_{\max}}>0,
\end{align*}
which is equivalent to
\begin{align}\label{ineq-la1orlan}
\fr{1}{\la_1}<\fr{1}{\mu_{\min}}+u-\vp \qu\text{or}\qu \la_n<\left(n+\fr{1}{\al}\right)\mu_{\max}.
\end{align}
For the latter inequality, we relate the largest eigenvalue $\la_n$ with the smallest eigenvalue $\la_1$ by using Corollary \ref{c-K>c}. In fact, we have 
\begin{align}\label{ineq-la1lan}
\fr{1}{\la_1} = \fr{\la_2\cdots \la_n}{K} \le \fr{\la_n^{n-1}}{c},
\end{align}
where $c$ is the constant in Corollary \ref{c-K>c}. From \eqref{ineq-la1orlan} and \eqref{ineq-la1lan}, there exists a positive constant $C=C(n,\al,T,\Phi,\Si_0)$ such that
\begin{align*}
\fr{1}{\la_1} \le C.
\end{align*}
Using this and Lemma \ref{bed-bdd}, we finally conclude that
\begin{align}\label{ineq-lambda-last}
\fr{1}{\la_{\min}}\le\max_{\bS^n\ti[0,T']}\tilde W=\tilde W(z_0,t_0)=W(z_0,t_0)=\fr{e^{-\chi\bed(u-\vp)}}{\la_1}\le Ce^{\chi C_0}.
\end{align}

Since $T'$ is an arbitrary number in $(0,T)$ and the upper bound in \eqref{ineq-lambda-last} does not depend on $T'$, we obtain the conclusion by taking $T'\ra T$. 
\end{proof}

The lemma above automatically gives uniform upper bounds on principal curvatures. Indeed, the largest principal eigenvalue satisfies
\begin{align*}
\la_n = \fr{K}{\la_1\cdots \la_{n-1}}\le \fr{K}{\la_1^{n-1}}
\end{align*}
which is bounded by Lemma \ref{lem-KC} and Lemma \ref{lem-lac}.
In summary, all principal curvatures of the solution $\{\Si_t\}$ to \eqref{astde} over $\bS^n\ti [0,T)$ satisfy the following uniform estimates:
\begin{align}\label{lambda-bounds}
0<c_T\le \la_i(z,t) \le C_T \qu \text{for all}\qu (z,t)\in \bS^n\ti [0,T), 1\le i \le n,
\end{align}
where $c_T$ and $C_T$ are constants independent of $\de$.

\section{Proof of Theorem \ref{t-main}}\label{sec:pfThm1}
The proof of Theorem \ref{t-main} consists of two parts. 
First, we provide an existence result for the long time solution that have the optimal $C^{1,1}$ regularity.
In this part we will use the uniform estimates obtained in the previous sections.
Second, we show that the motion of the solution is identically equal to that of the obstacle after some time.

\begin{proof}[proof of Theorem \ref{t-main}]
From Lemma \ref{lem-locE}, approximate solutions to \eqref{astde} exist at least for short time. Let $T>0$ be the maximal time for which the solutions exist. We claim that $T=\infty$. If not, we apply Lemma \ref{lem-KC} and Lemma \ref{lem-lac} to the solutions so that \eqref{lambda-bounds} holds over $\bS^n\ti [0,T)$. Then the linearized operator $\cL$ is uniformly parabolic, i.e., 
\begin{align*}
0<\fr{1}{C}|\xi|^2\le \al K^\al b^{ij}\xi_i\xi_j \le C |\xi|^2 \qu \text{on } \bS^n\ti [0,T)
\end{align*}
for all $\xi=(\xi_i)\in \R^n$, where $C=C(n,\al,\Si_0,\Phi,T)$ is a positive constant. By applying the standard argument in parabolic theory \cite{Lieberman96_book}, the solutions exist beyond $T$, which is contradict to the maximality of $T$. Hence, the solutions to \eqref{astde} exist for all time.

Next we prove the uniqueness of solutions to \eqref{eq-main-u}. Let $u_1$ and $u_2$ be two viscosity solutions of \eqref{eq-main-u} with the same initial data. Assume that $u_1<u_2$ at some point $(z_0,t_0)$. Since $u_1=u_2$ at the initial time, we have $t_0>0$. Observing that $\vp\le u_1 < u_2$ in the set
\begin{align*}
G= \{\bS^n \ti (0,t_0]: u_1<u_2\},
\end{align*}
we have $-\pa_t u_2 = K_2^\al$, where $K_2$ is the Gauss curvature of the hypersurface produced by $u_2$. On the other hand, $u_1$ satisfies $-\pa_tu_1 \le K_1^\al$, where $K_1$ is the Gauss curvature with respect to $u_1$, similarly. In other words, $u_1$ is a supersolution and $u_2$ is a subsolution of the same equation $-\pa_t u = K^\al $. Since $u_1=u_2$ on the parabolic boundary of $G$, it follows from the comparison principle that $u_2\le u_1$ in $G$ which is a contradiction. Therefore, $u_1\equiv u_2$, and the solution to \eqref{eq-main-u} is unique.

Fix a time $T\in(0,\infty)$, and let $u^\de$ be a solution of \eqref{astde} over $\bS^n\ti [0,T)$. By the estimates on the principal curvatures \eqref{lambda-bounds}, the family $\{u^\de\}_{\de>0}$ of solutions is uniformly bounded in $C^{1,1}$ so that there exists a function $u\in C^{1,1}$ such that $u^\de $ converges over a subsequence $\de=\de_j\ra 0$ to $u$ in $C^{1,\beta}$ for any $0<\beta<1$. Moreover, it follows from the uniform boundedness of the penalty term $\bed$ obtained in Lemma \ref{bed-bdd} that $u$ satisfies $u\ge \vp$. Since $T$ is arbitrary, we have the viscosity solution $u$ of \eqref{eq-main-u} that exists for all time with controlled principal curvatures
\begin{align}\label{ineq-ClaC}
\fr{1}{C_T}\le \la_i\le C_T, \qu \text{for } i=1,\cdots,n,
\end{align}
in $\bS^n\ti [0,T)$, where $C_T$ is a constant depending on $T$. We will remove the dependence of $T$ in the constant $C_T$ after analyzing the long time behavior of the solution. 

Now we prove that the solution coincides with the shrinking obstacle after some time $T_*$. Fix a point $(z_1,t_1)\in \bS^n\ti [0,\infty)$ and consider the point $\Phi(z_1,t_1)$ on the obstacle $\Phi(\cdot,t_1)$. Since the principal curvatures of the shrinking obstacle satisfy
\begin{align}\label{ineq-mumu}
\mu_i(\cdot,t)\ge \mu_i(\cdot,0)>0,\qu\text{for all }i=1,\cdots,n,
\end{align}
we can take a ball touching at the point $\Phi(z_1,t_1)$ and containing the obstacle $\Phi(\cdot,t_1)$ at $t_1$. Denoting by $a(z_1,t_1)$ and $r(z_1,t_1)$ the center and the radius of the ball, respectively, we write the ball as
\begin{align}\label{a+B}
a(z_1,t_1)+B_{r(z_1,t_1)}.
\end{align}
By \eqref{ineq-mumu} and $\conv\Phi(\cdot,t)\su \conv\Phi(\cdot,0)$ for all $t\ge0$, we may assume that $r(z_1,t_1)\le \rho$ for some constant $\rho$ independent of $(z_1,t_1)$.

We will construct a barrier for the point $\Phi(z_1,t_1)$ with 
\begin{align}\label{ineq-tur}
t_1> \fr{1}{n\al+1} (\norm{u_0}_\infty+\rho)^{n\al+1}.
\end{align} 
Recall that the solution of the $\al$-Gauss curvature flow of the ball centered at $a_0$ with radius $R_0$ at time $t$ is given by $a_0+ \pa B_{R(t)}$ where $R(t)=(R_0^{n\al +1}-(n\al+1)t))^{1/(n\al+1)}$. Thus the $\al$-Gauss curvature flow whose shape at $t_1$ is equal to the boundary of \eqref{a+B} can be written as
\begin{align}\label{a+pB}
a(z_1,t_1)+\pa B_{R(t)}=\{x\in \R^{n+1}:|x-a(z_1,t_1)|=R(t)\}
\end{align}
where $R(t)= (r(z_1,t_1)^{n\al+1}+(n\al+1)(t_1-t))^{1/(n\al+1)}$. Then the support function $u_p$ of the ball \eqref{a+pB} is 
\begin{align*}
u_p(z,t) = \inn{a(z_1,t_1)}{z}+R(t) 
\end{align*}
Note that $u_p$ is concave in $t$ so that $u_p-\vp$ is also concave in $t$, and $u_p-\vp$ is nonnegative in $\bS^n\ti \{t_1\}$ since the ball $a(z_1,t_1)+B_{R(z_1,t_1)}$ contains the obstacle $\Phi(\cdot,t_1)$. 
Furthermore, from the fact that the origin is contained in the obstacle $\Phi(\cdot,t_1)$ which is again contained in the ball $a(z_1,t_1)+B_{R(t)}$, we have 
\begin{align*}
|a(z_1,t_1)|=|0-a(z_1,t_1)|\le R(t_1)=r(z_1,t_1).
\end{align*}
This gives that at $t=0$, 
\begin{align*}
u_p(z,0)-\vp(z,0)&=R(0)+\inn{a(z_1,t_1)}{z}-\vp_0(z)
\\
&\ge (r(z_1,t_1)^{n+1}+(n\al+1)t_1)^{1/(n\al+1)}-|a(z_1,t_1)| -\norm{\vp_0}_{\infty}
\\
&\ge ((n\al+1)t_1)^{1/(n\al+1)}-\rho-\norm{u_0}_\infty ,
\end{align*}
where we have used $|a(z_1,t_1)|\le r(z_1,t_1)\le \rho$ and $\vp_0\le u_0$.
Then it follows from \eqref{ineq-tur} that $u_p-\vp> 0$ at $t=0$. This, together with the concavity of $u_p-\vp$ and $u_p-\vp\ge0$ at $t=t_1$, concludes that
$u_p> \vp$ in $\bS^n\ti [0,t_1)$, and therefore, the solution $u_p$ of the $\al$-Gauss curvature flow is also the solution of \eqref{eq-main-u}, the $\al$-Gauss curvature flow with the shrinking obstacle. Moreover, at the initial time $t=0$, from \eqref{ineq-tur}, we have
\begin{align*}
u_p(z,0)-u_0\ge ((n\al+1)t_1)^{1/(n\al+1)}-\rho-\norm{u_0}_\infty>0.
\end{align*}

In other words, both $u_p:\bS^n\ti[0,t_1)\ra\R$ and $u:\bS^n\ti[0,t_1)\ra\R$ are the solution to the $\al$-Gauss curvature flow with the shrinking obstacle, and the initial data of $u_p$ encloses the initial data $u_0$, which imply $u_p\ge u$ in $\bS^n\ti[0,t_1)$ by the comparison principle. Since $u_p(z_1,t_1)=\vp(z_1,t_1)$ and $u_p\ge u\ge\vp$, we conclude that $u(z_1,t_1)=\vp(z_1,t_1)$.
Hence, setting $T_*=(\norm{u_0}_\infty+\rho)^{n\al+1}/(n\al+1)$ yields $u\equiv \vp$ in $\bS^n\ti [T_*,\infty)$ since $(z_1,t_1)$ was an arbitrary point in $ \bS^n\ti (T_*,\infty)$.

It remains to prove that the principal curvatures $\la_1,\cdots,\la_n$ are globally bounded from above and below in $\bS^n\ti[0,\infty)$. Taking $T=T_*$ in \eqref{ineq-ClaC}, we get $C_{T_*}^{-1}\le \la_i\le C_{T_*}$ on the time interval $[0,T_*)$. On another time interval $[T_*,\infty)$, since $u$ is identically equal to $\vp$ and the principal curvatures $\mu_1,\cdots,\mu_n$ of the obstacle is globally bounded from above and below in $\bS^n\ti[0,\infty)$, we can obtain the desired bounds. This completes the proof. 
\end{proof}

\section{Proof of Theorem \ref{t-main-2}}\label{sec:pfThm2}
To prove that the free boundary is locally $C^1$ graph, we need several ingredients: non-degeneracy (Lemma \ref{lem-non-deg}), classification of blowups \eqref{class-v0}, continuity of speed \eqref{eq:vt=0}, and directional monotonicity (Lemma \ref{lem:dir-mono} and \eqref{eq-vr-dir}).

Let $w$ be a solution to the local graph representation of the $\al$-Gauss curvature flow with a shrinking obstacle, \eqref{eq-main-w}. Set $v= \phi -w$. Then $v:Q_1\ra\R$ satisfies
\begin{equation}\label{eq-local-v}
\begin{alignedat}{3}
F(D^2v,Dv,x,t)-\fr{\pa}{\pa t} v &= f(Dv,x,t) \qu &&\text{in} \qu \Om(v)=\{v>0\},
\\
v &\ge 0 \qu &&\text{in} \qu Q_1,
\\
|D^2v|+|\pa_t v| &\le C \qu &&\text{in} \qu Q_1, 
\end{alignedat}
\end{equation}
where 
\begin{align*}
F(M,p,x,t) &=\fr{\det D^2\phi(x,t)^\al - \det(D^2\phi(x,t)-M)^\al}{(1+|D\phi(x,t)-p|^2)^{\frac{(n+2)\al-1}{2}}},
\\
f(p,x,t) & = -\fr{\pa}{\pa t} \phi(x,t)+\fr{\det D^2\phi(x,t)^\al }{(1+|D\phi(x,t)-p|^2)^{\frac{(n+2)\al-1}{2}}}.
\end{align*}

Observe that $F(0,Dv,x,t)=0$, $f \ge c>0$ for some constant $c$, and $F(\cdot,Dv)$ is convex if $\al\le\fr{1}{n}$. Moreover, the operator $F$ is uniformly elliptic since the eigenvalues of $D^2w$ are bounded above and below by positive constants; there exists a constant $\theta>0$ such that $\theta^{-1}|\xi|^2\le F^{ij}\xi_i\xi_j\le \theta|\xi|^2$, where
\begin{align*}
F^{ij}:=\fr{\pa F}{\pa M_{ij}}(D^2v,Dv,x,t)=\fr{\al \det(D^2w)^\al [(D^2w)^{-1}]^{ij}}{(1+|D\phi-Dv|^2)^{\frac{(n+2)\al-1}{2}}}.
\end{align*}

Let us fix some notations. For a point $X=(x,t)$, let $Q_r(X)= B_r(x)\ti(t-r^2,t)$. We denote by $\pa_pQ_r(X)$ the parabolic boundary of $Q_r(X)$.

In Theorem \ref{t-main}, we proved the optimal curvature bounds for the solution, which implies the optimal $C^{1,1}$ regularity of the solution. In particular, the solution has at most quadratic growth especially near the free boundary. The following lemma says the quadratic growth is actually achieved near the free boundary. 

\begin{lemma}[non-degeneracy]\label{lem-non-deg}
Let $v$ be a solution of \eqref{eq-local-v} with $\Om=\Om(v)$. Then for any $X_0=(x_0,t_0)\in \ov\Om\cap Q_{1/2}$, we have the inequality
\begin{align}\label{ineq-nondeg}
\sup_{\pa_p Q_r(X_0)} v \ge v(X_0)+\fr{cr^2}{2n\theta+1} \qu \text{for all} \qu 0<r<1/4.
\end{align}
\end{lemma}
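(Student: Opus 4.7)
The plan is to adapt Caffarelli's classical comparison-function argument to this fully nonlinear parabolic obstacle. First I would establish the estimate when $X_0$ is replaced by an interior point $X_1 \in \Omega(v) \cap Q_{1/2}$; the general case $X_0 \in \overline{\Omega(v)} \cap Q_{1/2}$ then follows by approximating $X_0$ from inside $\Omega$ and using continuity of $v$ together with compactness of the parabolic boundary to pass a near-maximizer $X^*_1 \in \partial_p Q_r(X_1)$ to a limit $X^*_0 \in \partial_p Q_r(X_0)$.

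For the interior case, fix $X_1 = (x_1, t_1) \in \Omega \cap Q_{1/2}$, $r < 1/4$, and a parameter $A \in (0, 1)$ which will be sent to $1^-$ at the end, and introduce
\[
\psi(x,t) = v(x,t) - P(x,t), \qquad P(x,t) = \frac{c}{2n\theta+1}\bigl(|x - x_1|^2 + A(t_1 - t)\bigr)
\]
on $\overline{Q_r(X_1)} \subset Q_1$. Two features of $P$ drive the argument: $P(X_1) = 0$ so $\psi(X_1) = v(X_1) > 0$, while on the parabolic boundary $\partial_p Q_r(X_1)$ one has $P \ge \tfrac{Acr^2}{2n\theta+1}$ (on the lateral face $|x-x_1|^2 = r^2$, and on the bottom face $A(t_1 - t) = Ar^2$). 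Moreover, on the free boundary $\partial\Omega \cap Q_r(X_1)$ we have $v \equiv 0$ and hence $\psi = -P \le 0 < \psi(X_1)$. Thus if interior maxima of $\psi$ on $\overline{Q_r(X_1) \cap \Omega}$ can be excluded, the supremum must be attained at some $X^* \in \partial_p Q_r(X_1) \cap \overline\Omega$, whence $v(X^*) \ge v(X_1) + P(X^*) \ge v(X_1) + \tfrac{Acr^2}{2n\theta+1}$; sending $A \to 1^-$ produces the desired bound.

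To rule out an interior maximum $\bar X$ (including the open top slice $t = t_1$, which the parabolic maximum principle treats identically), the standard inequalities at $\bar X$ read
\[
Dv = DP, \qquad D^2 v \le \tfrac{2c}{2n\theta+1}\, I, \qquad \partial_t v \ge -\tfrac{Ac}{2n\theta+1}.
\]
Since $\al \le 1/n$ ensures $F(\cdot, p, x, t)$ is convex with $F(0, p, x, t) = 0$, convexity at the tangent point $D^2 v$ yields $F(D^2 v, Dv, x, t) \le F^{ij}(D^2 v, Dv, x, t)\, D^2 v_{ij}$. Combining with uniform ellipticity $\operatorname{tr}(F^{ij}) \le n\theta$ and $D^2 v \le \tfrac{2c}{2n\theta+1}I$ gives $F^{ij} D^2 v_{ij} \le \tfrac{2nc\theta}{2n\theta+1}$. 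Inserting into the PDE $F - \partial_t v = f$ at $\bar X$ then yields
\[
c \le f \le \frac{2nc\theta}{2n\theta+1} + \frac{Ac}{2n\theta+1} = \frac{(2n\theta + A)c}{2n\theta + 1} < c,
\]
a contradiction.

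The main technical point is the convexity-plus-ellipticity estimate: the trace bound $F^{ij} D^2 v_{ij} \le \tfrac{2nc\theta}{2n\theta+1}$ uses the ellipticity constant of $F^{ij}$ \emph{at the actual solution}, and convexity of $F$ in $M$ is what allows us to evaluate $F^{ij}$ at the endpoint $D^2 v$ rather than integrate along the segment $[0, D^2 v]$; the uniform ellipticity constant $\theta$ is supplied by the bounded-curvature conclusion of Theorem \ref{t-main}. The remaining steps -- containment $Q_r(X_1) \subset Q_1$ for $X_1$ near $X_0$ and $r < 1/4$, existence of a maximizer on the compact set $\overline{Q_r(X_1) \cap \Omega}$, and the limit $X_1 \to X_0$ via the $C^{1,1}$-regularity of $v$ -- are routine bookkeeping.
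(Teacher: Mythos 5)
Your proof is correct and follows essentially the same comparison-function argument as the paper. The two small differences are cosmetic: the paper works with $A=1$ directly by showing $\ov v = v - P$ satisfies $F(D^2\ov v, Dv, x, t) - \pa_t \ov v \ge 0$ (so the parabolic maximum principle applies outright, without needing a strict contradiction), and it derives the bound $F(D^2 v - \tfrac{2c}{2n\theta+1}I) \ge F(D^2 v) - \tfrac{2nc\theta}{2n\theta+1}$ from uniform ellipticity alone, whereas you invoke convexity of $F$ (available here since $\al \le 1/n$, but not actually needed for this lemma).
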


\begin{proof}
By an approximation argument, it suffices to prove \eqref{ineq-nondeg} for $X_0\in \Om\cap Q_{1/2}$. For a point $X_0=(x_0,t_0)\in \Om\cap Q_{1/2}$, we define a function $\ov v:Q_1\ra \R$ by 
\begin{align*}
\ov v(x,t) = v(x,t)- \fr{c(|x-x_0|^2-(t-t_0))}{2n\theta+1}.
\end{align*}
We claim that $\ov v$ satisfies $F(D^2\ov v,D v,x,t)-\pa_t\ov v\ge0$ in $\Om\cap Q_1$. If the claim holds, then it follows from $\ov v(X_0)=v(X_0)\ge 0$ and $\ov v=- \fr{c(|x-x_0|^2-(t-t_0))}{2n\theta+1}<0$ on $\pa\Om$ that 
\begin{align*}
0\le \sup_{\Om\cap Q_r(X_0)} \ov v\le \sup_{\pa_pQ_r(X_0)} \ov v\le \sup_{\pa_pQ_r(X_0)}  v-\fr{cr^2}{2n\theta+1},
\end{align*}
by the maximum principle, which proves \eqref{ineq-nondeg}.

We now prove the claim. Note that $D^2\ov v =D^2v -\fr{2c}{2n\theta+1}I$, where $I$ denotes the $n\ti n$ identity matrix. Then by the uniformly ellipticity, we have 
\begin{align*}
F(D^2\ov v,D v,x,t)-\pa_t\ov v&=F(D^2v-\fr{2c}{2n\theta+1}I,D v,x,t)-\pa_tv -\fr{c}{2n\theta+1}
\\
&\ge F(D^2v,D v,x,t)-\fr{2n\theta c}{2n\theta+1}-\pa_tv -\fr{c}{2n\theta+1}
\\
&=f(Dv,x,t)-c \ge 0 \qu \text{in }\Om.
\end{align*}
Therefore, the claim holds.
\end{proof}

Take a free boundary point $X_0=(x_0,t_0) \in \pa\Om\cap Q_{1/8}$ and consider the rescaled function $v_r$ ($r>0$) of $v$ around $X_0$ defined by 
\begin{align*}
v_r(y,s)=\fr{v(x_0+ry,t_0+r^2s)-v(x_0,t_0)}{r^2}, \qu (y,s)\in Q_{\tfrac{1}{2r}}.
\end{align*}
By Theorem \ref{t-main} and the scaling properties $D^2_yv_r(y,s) = D^2_xv(x_0+ry,t_0+r^2s)$ and $ \pa_sv_r(y,s) = \pa_tv(x_0+ry,t_0+r^2s)$, the rescaled functions $\{v_r\}_{r>0}$ have uniform $C^{1,1}$ estimates. Then we can extract a converging subsequence $v_{r_j}\ra v_0$ in $C_{\text{loc}}^{1,\ga}(\R^n\ti \R)$ for any $0<\ga<1$, where $v_0\in C^{1,1}_{\text{loc}}(\R^n\ti\R)$.

By the standard argument for blowups (see Lemma 18 in \cite{IM16_AIHPANL} or Proposition 3.17 in \cite{PSU12_Book}), together with the non-degeneracy (Lemma \ref{lem-non-deg}), $v_0$ satisfies 
\begin{align}\label{eq-v0}
F(D^2v_0(y,s),0,x_0,t_0)-\pa_s v_0(y,s)=f(0,x_0,t_0)\qu \text{in} \qu \Om(v_0).
\end{align}
Indeed, it follows from the equation
\begin{align*}
F(D^2v_{r_j},r_jDv_{r_j},x_0+r_jy,t_0+r_j^2s)-\pa_sv_{r_j}
&=f(r_jDv_{r_j},x_0+r_jy,t_0+r_j^2s)
\end{align*}
in $\Om(v_{r_j})$ by taking $j\ra \infty$, where we used again the scaling properties $D_yv_r=D_xv/r$, $D^2_yv_r=D^2v$, and $D_sv_r= D_tv$.

Recall that $F$ is a convex operator in $D^2v$ variable since $\al\le 1/n$, that $F(0,0,x_0,t_0)=0$, and that $F(\cdot,0,x_0,t_0)$ is uniformly elliptic. By the work of Figalli and Shahgholian \cite[Proposition 3.2]{FS15_AMPA} on the classification of global solutions to \eqref{eq-v0}, it follows from the uniform thickness condition in Theorem \ref{t-main-2} that, after a rotation,
\begin{align}\label{class-v0}
v_0(y,s)= \tfrac{\ga}{2}[(x_1)_+]^2,
\end{align}
where $\ga\in (1/\theta,\theta)$ such that $F(\ga e_1\otimes e_1,0,x_0,t_0) = f(0,x_0,t_0)$. In particular, $v_0$ is time-independent. 

We now claim the continuity of the speed, 
\begin{align}\label{eq:vt=0}
\lim_{X \ra \pa\Om}\pa_tv(X)=0,
\end{align}
under the uniform thickness assumption.
We prove the claim by contradiction. 
If there exists a sequence $X_j\in \Om(v)$ such that $X_j \ra \pa\Om(v)$ and $|\pa_t v(X_j)|\ge \e $ for some $\e>0$, we define $d_j=\text{dist}(X_j,\pa\Om(v))$ and consider 
\begin{align*}
v_{d_j}(y,s)=\fr{v(x_j+d_jy,t_j+d_j^2s)-v(x_j,t_j)}{d_j^2}.
\end{align*}
Then it can be verified that $v_{d_j}$ converges to a global solution $\tilde v_0$ of \eqref{eq-v0} with $|\pa_s \tilde v(0)|>0$, which contradicts the fact that the global solution $\tilde v_0$ is time-independent.

Our next lemma is the key ingredient of the directional monotonicity for solutions to 
\begin{align}\label{eq-vr}
F(D^2v,rDv,x_0+ry,t_0+r^2s)-\pa_sv
&=f(rDv,x_0+ry,t_0+r^2s).
\end{align}
Note that the rescaled function $v_r$ solves \eqref{eq-vr} in $\Om(v_r)$.
For simplicity, we assume for a moment $(x_0,t_0)=(0,0)$.
\begin{lemma}\label{lem:dir-mono}
Assume that $v$ satisfies \eqref{eq-vr} in $\Om(v)$. For a number $\kappa\in(0,1)$, let $e=(e_x,e_t)\in \bS^n$ be any space-time direction satisfying $e\cdot (e_1,0) \ge \kappa>0$.
If $C_\kappa\pa_e v-v\ge -\e_0$ in $Q_1$ for some constants $C_\kappa$ and $\e_0$, then $C_\kappa \pa_e v-v\ge 0$ in $Q_{1/2}$ provided that $\e_0<\fr{c}{32(2n\theta+1)}$ and $0<r\le \fr{c}{2C}$, where $C$ is a constant depending only on $\norm{f}_{C^1},\norm{F}_{C^1},\norm{v}_{C^1}$ and $\kappa$.
\end{lemma}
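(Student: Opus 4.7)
The plan is to establish that $\tilde W := C_\kappa \pa_e v - v$ is a strict supersolution of the linearization of \eqref{eq-vr} at $v$, to note that $\tilde W \equiv 0$ on the free boundary, and then to compare $\tilde W$ against an explicit parabolic-distance cutoff barrier so as to promote the hypothesis $\tilde W \ge -\e_0$ on $Q_1$ into $\tilde W \ge 0$ on $Q_{1/2}$.

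First I would linearize \eqref{eq-vr} at $v$ to obtain the uniformly parabolic operator
\[
L[u] := F^{ij}(D^2 v, rDv, ry, r^2 s)\, u_{ij} + r(F^{p_k} - f^{p_k})\, u_k - \pa_s u,
\]
whose ellipticity constants are controlled by $\theta$ thanks to the $C^{1,1}$ bound from Theorem \ref{t-main}. Since $\al\le 1/n$ makes $F(\cdot,p,x,t)$ convex and $F(0,\cdot,\cdot,\cdot)=0$, the subgradient inequality together with the equation itself gives $F^{ij}v_{ij}\ge \pa_sv+f$, hence $L[v]\ge f-C'r\ge c-C'r$. Differentiating \eqref{eq-vr} in the direction $e=(e_x,e_t)$ produces $|L[\pa_e v]|\le C_1 r$, with $C_1$ depending only on $\|F\|_{C^1}$, $\|f\|_{C^1}$, $\|v\|_{C^1}$, and $\kappa$ (via $C_\kappa$). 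Combining,
\[
L[\tilde W] = C_\kappa L[\pa_e v]-L[v] \le -c/2 \quad\text{in } \Om(v)\cap Q_1,
\]
as soon as $r\le c/(2C)$ with $C:=C_\kappa C_1+C'$.

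For the free-boundary trace I would argue that on $\pa\Om(v)\cap Q_1$ one has $v=0$ by definition, $Dv=0$ from the optimal $C^{1,1}$ regularity combined with $v\ge 0$, and $\pa_t v=0$ from the continuity of the speed \eqref{eq:vt=0}; hence $\pa_e v=0$ and $\tilde W=0$ there. On $\pa_p Q_1$ the hypothesis delivers $\tilde W\ge -\e_0$.

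The hard part will be the barrier construction. I would set $\rho(y,s):=(|y|^2+(2n\theta+1)(-s))^{1/2}$, pick a smooth cutoff $\eta:[0,\infty)\to[0,1]$ with $\eta\equiv 0$ on $[0,1/2]$, $\eta\equiv 1$ on $[1,\infty)$ and $|\eta'|+|\eta''|$ bounded by an absolute constant, and define $\psi(y,s):=-\e_0\,\eta(\rho(y,s))$. Then $\psi\equiv 0$ on $Q_{1/2}$, $\psi\in[-\e_0,0]$ throughout $Q_1$, and $\psi\equiv -\e_0$ outside $\{\rho\le 1\}$. A direct computation of $\psi_{ij}$ and $\pa_s\psi$, together with the uniform ellipticity of $F$ and $\sum_i\rho_i^2\le 1$ on $\supp(\eta')$, should produce an estimate of the form $|L[\psi]|\le 8(2n\theta+1)\e_0$, so that the threshold $\e_0<c/(32(2n\theta+1))$ forces $L[\psi]\ge -c/4\ge L[\tilde W]$. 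Consequently, $\tilde W-\psi$ is a supersolution of $L$ in $\Om(v)\cap Q_1$, and its parabolic-boundary values satisfy $\tilde W-\psi=-\psi\ge 0$ on $\pa\Om(v)\cap Q_1$ and $\tilde W-\psi\ge -\e_0+\e_0=0$ on $\pa_pQ_1\cap\overline{\Om(v)}$. The parabolic maximum principle then yields $\tilde W\ge \psi$ in $\Om(v)\cap Q_1$, and since $\psi\equiv 0$ on $Q_{1/2}$ while $\tilde W\equiv 0$ on the coincidence set, the desired $\tilde W\ge 0$ on $Q_{1/2}$ follows. The main technical task is tracking the constants in the barrier estimate so as to recover the sharp coefficient $32(2n\theta+1)$ in the threshold on $\e_0$.
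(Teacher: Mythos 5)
Your overall structure (linearize to get $L[\tilde W]\le -c/2$ for $r$ small, use $\tilde W=0$ on the free boundary, then run a comparison argument against an explicit barrier) captures the right idea, and the computation of $L[v]\ge f-C'r$ and $|L[\pa_e v]|\le C_1 r$ matches the paper exactly. However, the barrier $\psi(y,s)=-\e_0\,\eta(\rho(y,s))$ with $\rho(y,s)=(|y|^2+(2n\theta+1)(-s))^{1/2}$ does not do what you claim: on $Q_{1/2}=B_{1/2}\times(-1/4,0)$ the quantity $\rho^2=|y|^2+(2n\theta+1)(-s)$ ranges up to $\tfrac{1}{4}+\tfrac{2n\theta+1}{4}=\tfrac{n\theta+1}{2}$, which exceeds $\tfrac14$ whenever $n\theta\ge\tfrac12$ (i.e.\ essentially always). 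So $\rho>1/2$ on a large portion of $Q_{1/2}$, $\eta(\rho)\ne 0$ there, and the crucial claim ``$\psi\equiv 0$ on $Q_{1/2}$'' fails. Without it, the comparison $\tilde W\ge\psi$ on $\Om(v)\cap Q_1$ does not yield $\tilde W\ge 0$ on $Q_{1/2}$. The factor $(2n\theta+1)$ was inserted into $\rho$ to tame $\pa_s\psi$ against $F^{ij}\pa_{ij}\psi$, but precisely that scaling ruins the vanishing-on-$Q_{1/2}$ requirement; dropping the factor fixes the support but then the $|L[\psi]|$ estimate changes and the advertised threshold $\e_0<\tfrac{c}{32(2n\theta+1)}$ will not come out.

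The paper sidesteps this anisotropy issue entirely: it argues by contradiction, picks a putative bad point $(y_0,s_0)\in Q_{1/2}\cap\Om(v)$, and adds the explicit quadratic $\tfrac{c\,(|y-y_0|^2-(s-s_0))}{2(2n\theta+1)}$, which is pointwise nonnegative on the past cylinder, has $L$-value $\le \tfrac{c}{2}+O(r)$, and on $\pa_p Q_{1/4}(y_0,s_0)$ is bounded below by exactly $\tfrac{c}{32(2n\theta+1)}$. This local quadratic barrier (a parabolic-Harnack-type polynomial rather than a radial cutoff) makes the space/time mismatch disappear and produces the stated constant cleanly. Your global cutoff route can in principle be salvaged by choosing a purely parabolic distance such as $\rho=\max(|y|,\sqrt{-s})$ and a cutoff with $\eta\equiv 0$ on $[0,1/2]$, $\eta\equiv 1$ on $[1,\infty)$, but you would then need to re-derive the $|L[\psi]|$ bound and would in general land on a different (larger) threshold for $\e_0$; the exact constant in the lemma would not be recovered.
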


\begin{proof}
By convexity of $F(\cdot,rDv,ry,r^2s)$ and $F(0,rDv,ry,r^2s)=0$, we have 
\begin{align*}
F(D^2v,rDv,ry,r^2s)-F^{ij}(D^2v,rDv,ry,r^2s)\pa_{ij}v\le F(0,rDv,ry,r^2s)=0.
\end{align*}
Thus we obtain $F^{ij}\pa_{ij}v-\pa_sv\ge F-\pa_sv=f\ge c>0$. On the other hand, by differentiating \eqref{eq-vr} with respect to a direction $e\in \bS^n$, we get
\begin{align*}
Lv_e :=(F^{ij}\pa_{ij}-\pa_s+r(F_{p_i}-f_{p_i}) \pa_i)v_e=r(f_x-F_x)\cdot e_x+r^2(f_t-F_t)e_t,
\end{align*}
where $\pa_{i}=\tfrac{\pa}{\pa y_i}$ and $v_e=e_x\cdot D_yv+e_t\pa_s v$. 
Since $f_x$, $F_x$, $f_t$, and $F_t$ are bounded, the right hand side is as small as we want, provided $r>0$ is sufficiently small.

Assume by contradiction that there is $(y_0,s_0)\in Q_{\fr{1}{2}}\cap \Om(v)$ such that $C_\kappa\pa_e v(y_0,s_0)-v(y_0,s_0)<0$. If we consider
\begin{align*}
\ov v(x,t) = C_\kappa \pa_e v-v+ \fr{c(|y-y_0|^2-(s-s_0))}{2(2n\theta+1)},
\end{align*}
where $c$ is the lower bound of $f$ and $\theta$ is the ellipticity constant, then 
\begin{align*}
L\ov v&\le C_\kappa(r(f_x-F_x)\cdot e_x+r^2(f_t-F_t)e_t)
\\
&\qu-f-r((F_{p_i}-f_{p_i})\pa_iv+\fr{c}{2}+r(F_{p_i}-f_{p_i}) (y-y_0)_i
\\
&\le rC(\norm{f}_{C^1},\norm{F}_{C^1},\norm{v}_{C^1},\kappa)-\fr{c}{2}.
\end{align*}
Hence, for sufficiently small $r$, we have $L\ov v\le 0$ in $Q_{1/4}(y_0,s_0)\cap \Om(v)$. By the minimum principle, the minimum is achieved on the boundary of $Q_{1/4}(y_0,s_0)\cap \Om(v)$. However, since $\ov v>0$ on $\Om(v)$ and $\ov v(y_0,s_0)<0$, the function $\ov v$ attains its minimum on $\pa_p Q_{1/4}(y_0,s_0)$. Therefore, we conclude that
\begin{align*}
0>\min_{\pa_p(Q_{1/4}(y_0,s_0)\cap\Om(v))}\ov v = -\e_0 +\fr{c}{32(2n\theta+1)},
\end{align*}
which is a contradiction. 
\end{proof}

To finish the proof of Theorem \ref{t-main-2}, we take any number $\kappa\in (0,1)$ and consider any direction $e=(e_x,e_t)\in \bS^n\subset R^{n+1}$ such that $e\cdot (e_1,0)=e_x\cdot e_1 \ge \kappa>0$. Recall that the limit $v_0$ of $v_{r_j}$ has the form $\fr{\ga}{2}[(x_1)_+]^2$. Then for the constant $C_\kappa= 2/\kappa$, we have $C_\kappa\pa_{e} v_0-v_0\ge 0$ in $Q_1$. Using the $C^1$ convergence $v_{r_j}$ in the space variable and the continuity of the speed \eqref{eq:vt=0}, we induce $C_\kappa\pa_{e} v_{r_j}-v_{r_j}\ge -\e_0$ in $Q_1$ for sufficiently large $j\ge j_0$, where $\e_0$ is the constant given in Lemma \ref{lem:dir-mono}. Then by Lemma \ref{lem:dir-mono}, we obtain the improved inequality 
\begin{align}\label{eq-vr-dir}
C_\kappa\pa_{e} v_{r_j}-v_{r_j}\ge 0 \qu\text{in} \qu Q_{1/2}
\end{align}
for $j\ge j_0$. Since $v_{r_j} \ge0$, this implies $\pa_ev_{r_j}\ge0$ in $Q_{1/2}$. Scaling back to $v$, we conclude that there exists $r=r(\kappa)>0$ such that $\pa_e v\ge0$ in $Q_r(X_0)$ for all directions $e=(e_x,e_t)\in \bS^n$ satisfying $e\cdot (e_1,0)\ge \kappa>0$. This together with a simple compactness argument implies that the free boundary $\Ga(v)$ is $\kappa$-Lipschitz graph for any $\kappa\in (0,1)$. Then the $C^1$ graphness of the free boundary $\Ga(v)$ follows by the standard argument, see \cite{PSU12_Book} for instance.

\vspace{0.5cm}

\textbf{Acknowledgements.} 
We are grateful to the referee for helpful comments.
Ki-Ahm Lee was supported by NRF grant NRF-2020R1A2C1A01006256 funded by the Korean government (MSIP). 
Taehun Lee was supported by the NRF grant RS-2023-00211258 and KIAS Individual Grant MG079501.
\vspace{0.2cm}

\textbf{Conflict of interest.} On behalf of all authors, the corresponding author states that there is no conflict of interest.


\begin{thebibliography}{10}

\bibitem{ACN12_AIHPANL}
{\sc Almeida, L., Chambolle, A., and Novaga, M.}
\newblock Mean curvature flow with obstacles.
\newblock {\em Ann. Inst. H. Poincar\'{e} Anal. Non Lin\'{e}aire 29}, 5 (2012),
  667--681.

\bibitem{Andrews96_JDG}
{\sc Andrews, B.}
\newblock Contraction of convex hypersurfaces by their affine normal.
\newblock {\em J. Differential Geom. 43}, 2 (1996), 207--230.

\bibitem{Andrews99_Invent}
{\sc Andrews, B.}
\newblock Gauss curvature flow: the fate of the rolling stones.
\newblock {\em Invent. Math. 138}, 1 (1999), 151--161.

\bibitem{Andrews03_JAMS}
{\sc Andrews, B.}
\newblock Classification of limiting shapes for isotropic curve flows.
\newblock {\em J. Amer. Math. Soc. 16}, 2 (2003), 443--459.

\bibitem{AGN16_AM}
{\sc Andrews, B., Guan, P., and Ni, L.}
\newblock Flow by powers of the {G}auss curvature.
\newblock {\em Adv. Math. 299\/} (2016), 174--201.

\bibitem{BCD17_Acta}
{\sc Brendle, S., Choi, K., and Daskalopoulos, P.}
\newblock Asymptotic behavior of flows by powers of the {G}aussian curvature.
\newblock {\em Acta Math. 219}, 1 (2017), 1--16.

\bibitem{Caffarelli77_Acta}
{\sc Caffarelli, L.~A.}
\newblock The regularity of free boundaries in higher dimensions.
\newblock {\em Acta Math. 139}, 3-4 (1977), 155--184.

\bibitem{CEI99_IUMJ}
{\sc Chopp, D., Evans, L.~C., and Ishii, H.}
\newblock Waiting time effects for {G}auss curvature flows.
\newblock {\em Indiana Univ. Math. J. 48}, 1 (1999), 311--334.

\bibitem{Chow85_JDG}
{\sc Chow, B.}
\newblock Deforming convex hypersurfaces by the {$n$}th root of the {G}aussian
  curvature.
\newblock {\em J. Differential Geom. 22}, 1 (1985), 117--138.

\bibitem{DH99_JRAM}
{\sc Daskalopoulos, P., and Hamilton, R.}
\newblock The free boundary in the {G}auss curvature flow with flat sides.
\newblock {\em J. Reine Angew. Math. 510\/} (1999), 187--227.

\bibitem{DL04_Invent}
{\sc Daskalopoulos, P., and Lee, K.-A.}
\newblock Worn stones with flat sides all time regularity of the interface.
\newblock {\em Invent. Math. 156}, 3 (2004), 445--493.

\bibitem{FS14_ARMA}
{\sc Figalli, A., and Shahgholian, H.}
\newblock A general class of free boundary problems for fully nonlinear
  elliptic equations.
\newblock {\em Arch. Ration. Mech. Anal. 213}, 1 (2014), 269--286.

\bibitem{FS15_AMPA}
{\sc Figalli, A., and Shahgholian, H.}
\newblock A general class of free boundary problems for fully nonlinear
  parabolic equations.
\newblock {\em Ann. Mat. Pura Appl. (4) 194}, 4 (2015), 1123--1134.

\bibitem{Firey74_M}
{\sc Firey, W.~J.}
\newblock Shapes of worn stones.
\newblock {\em Mathematika 21\/} (1974), 1--11.

\bibitem{GTZ19_GF}
{\sc Giga, Y., Tran, H.~V., and Zhang, L.}
\newblock On obstacle problem for mean curvature flow with driving force.
\newblock {\em Geom. Flows 4}, 1 (2019), 9--29.

\bibitem{Hamilton94_incollection}
{\sc Hamilton, R.~S.}
\newblock Worn stones with flat sides.
\newblock In {\em A tribute to {I}lya {B}akelman ({C}ollege {S}tation, {TX},
  1993)}, vol.~3 of {\em Discourses Math. Appl.} Texas A \& M Univ., College
  Station, TX, 1994, pp.~69--78.

\bibitem{IM16_AIHPANL}
{\sc Indrei, E., and Minne, A.}
\newblock Regularity of solutions to fully nonlinear elliptic and parabolic
  free boundary problems.
\newblock {\em Ann. Inst. H. Poincar\'{e} Anal. Non Lin\'{e}aire 33}, 5 (2016),
  1259--1277.

\bibitem{KLR13_JDE}
{\sc Kim, L., Lee, K.-a., and Rhee, E.}
\newblock {$\alpha$}-{G}auss curvature flows with flat sides.
\newblock {\em J. Differential Equations 254}, 3 (2013), 1172--1192.

\bibitem{Lee98_Thesis}
{\sc Lee, K.-A.}
\newblock {\em Obstacle problems for the fully nonlinear elliptic operators}.
\newblock ProQuest LLC, Ann Arbor, MI, 1998.
\newblock Thesis (Ph.D.)--New York University.

\bibitem{LL21_CVPDE}
{\sc Lee, K.-A., and Lee, T.}
\newblock Gauss curvature flow with an obstacle.
\newblock {\em Calc. Var. Partial Differential Equations 60}, 5 (2021), Paper
  No. 166.

\bibitem{LP21_MA}
{\sc Lee, K.-A., and Park, J.}
\newblock The regularity theory for the parabolic double obstacle problem.
\newblock {\em Math. Ann. 381}, 1-2 (2021), 685--728.

\bibitem{LPS19_CVPDE}
{\sc Lee, K.-A., Park, J., and Shahgholian, H.}
\newblock The regularity theory for the double obstacle problem.
\newblock {\em Calc. Var. Partial Differential Equations 58}, 3 (2019), Paper
  No. 104, 19.

\bibitem{LS01_CPAM}
{\sc Lee, K.-A., and Shahgholian, H.}
\newblock Regularity of a free boundary for viscosity solutions of nonlinear
  elliptic equations.
\newblock {\em Comm. Pure Appl. Math. 54}, 1 (2001), 43--56.

\bibitem{Lieberman96_book}
{\sc Lieberman, G.~M.}
\newblock {\em Second order parabolic differential equations}.
\newblock World Scientific Publishing Co., Inc., River Edge, NJ, 1996.

\bibitem{MN15_IFB}
{\sc Mercier, G., and Novaga, M.}
\newblock Mean curvature flow with obstacles: existence, uniqueness and
  regularity of solutions.
\newblock {\em Interfaces Free Bound. 17}, 3 (2015), 399--426.

\bibitem{PSU12_Book}
{\sc Petrosyan, A., Shahgholian, H., and Uraltseva, N.}
\newblock {\em Regularity of free boundaries in obstacle-type problems},
  vol.~136 of {\em Graduate Studies in Mathematics}.
\newblock American Mathematical Society, Providence, RI, 2012.

\bibitem{RS20_ASNSPCS}
{\sc Rupflin, M., and Schn\"{u}rer, O.~C.}
\newblock Weak solutions to mean curvature flow respecting obstacles.
\newblock {\em Ann. Sc. Norm. Super. Pisa Cl. Sci. (5) 20}, 4 (2020),
  1429--1467.

\end{thebibliography}

\end{document}